\newtheorem{theorem}{Theorem}[section]
\newtheorem{lemma}[theorem]{Lemma}
\newtheorem{proposition}[theorem]{Proposition}
\newtheorem{corollary}[theorem]{Corollary}
\theoremstyle{definition}
\newtheorem{conj}{Conjecture}
\title{the Multiplication table for Smooth integers }
\author{Marzieh Mehdizadeh}
\address{ D\'{e}partment de Math\'{e}matiques et Statistique,Universit\'{e} de Montr\'{e}al, CP 6128, succ.
Centre-ville, Montr\'{e}al, QC, Canada H3C 3J7.}
\email{marzieh.mehdizadeh@gmail.com}
\date{}
\begin{document}

\maketitle

\begin{abstract}
The Erd\H{o}s multiplication table problem asks what is the number of distinct integers appearing in the $N\times N$ multiplication table. The order of magnitude of this quantity was determined by Ford ~\cite{y2y}. In this paper we study the number of $y-$smooth entries of the $N\times N$ multiplication table that is to say entries with no prime factors greater than $y$.
\end{abstract}

\section{Introduction}
The \textit{multiplication table problem} involves estimating 
$$
A(x):=\#\{ab: a,b\leq \sqrt{x}, \quad \text{and} \quad a,b \in \mathbb{N}\}.
$$  
This interesting question, posed by Erd\H{o}s, has been studied by many authors. Erd\H{o}s in ~ \cite{Erdos55}, showed that for all $\varepsilon>0$, we have
\begin{equation}\label{erd}
\frac{x}{(\log x)^{\delta+\epsilon}}\leq A(x)\leq \frac{x}{(\log x)^{\delta-\varepsilon}} \quad (x\to \infty),
\end{equation}
where
\begin{equation}\label{delta}
\delta=1-\frac{1+\log\log 2}{\log 2}= 0.0860\dots .
\end{equation}
The best estimate of $A(x)$ is a result due to Kevin Ford ~\cite{y2y}. He proved the following estimate, that significantly improved the order of magnitude of $A(x)$ as follows
\begin{equation}\label{Ford-08}
A(x)\asymp \frac{x}{(\log x)^{\delta}(\log \log x)^{3/2}}.
\end{equation}\\\\
\textbf{Notation:} In this paper, we use the notation  $f(x) \asymp g(x)$ if both $f(x)\ll g(x)$ and $g(x)\ll f(x)$ hold, where $f(x)\ll g(x)$ or $f(x)=O(g(x))$ interchangeably to mean that $|f(x)| \leq cg(x) $ holds with some constant $c$ for all $x$ in a range which will normally be clear from the context. Also, the notation $f(x)\sim g(x)$ means that $f(x)/g(x)\to 1$ as $x\to \infty$, and $f(x)=o(g(x))$ means that $f(x)/g(x)\to 0$ as $x\to \infty$.\\
Also, $u$ is defined as
\begin{equation*}
u:=\frac{\log x}{\log y} \qquad x\geq y\geq 2,
\end{equation*}
and we let $\log_k x$ denote the $k$-fold iterated logarithm, defined by $\log_1x:= \log x$ and $\log_k x= \log \log_{k-1}x$, for $k>1$.\\\\
Motivated by this background, in this paper we investigate the multiplication table problem for smooth integers. The set of $y-$smooth numbers, is defined by
$$S(x,y):= \{n\leq x : P(n)\leq y\},$$
where $P(n)$ denotes the largest prime factor of an integer $n \geq 2$, with the convention $P(1)=1$. Set
$$\Psi(x,y):= \big|S(x,y)\big |.$$
Our main aim in this work is to study
$$A(x,y):= \# \{ab: a,b\in S(\sqrt{x},y) \}.$$
Hence computing $A(x,y)$ is equivalent to estimating the size of  $S(\sqrt{x},y)\cdot S(\sqrt{x},y)$.\\
A simple approximation of $\Psi(x,y)$ proved by Canfield, Erd\H{o}s and Pomerance ~\cite{can} states that for a fixed $\epsilon>0$, we have
\begin{equation}\label{simple}
\Psi(x,y)= x u^{-u(1+o(1))} \quad \text{as}\quad  u\to \infty,
\end{equation}
for $u\leq y^{1-\epsilon}$, that is $y\geq (\log x)^{1+\epsilon}$.\\
By estimate \eqref{simple}, one can see that for $u$ large (or $y$ small), the value of $\Psi(x,y)$ is small. It counts the integers having large number of prime factors. Since in this case every $n$ has a lot of small prime factors, we can find $a$ and $b$ such that $n=ab$ and $a,b\leq \sqrt{x}$.\\\\
If $u$ is small (which means that $y$ is large), then by \eqref{simple}, one can deduce that the value of $\Psi(x,y)$ is large compared to $x$. In this case, $S(x,y)$ contains integers with large prime factors and we expect the size of $S(\sqrt{x},y)\cdot S(\sqrt{x},y)$ to be small.\\\\
It is good to mention that by a connection to sum-product problem, Banks and Covert ~\cite{banks} by invoking combinatorial tools,  have considered the behaviour of $A(x^2,y)= \big|S(x,y)\cdot S(x,y)\big|$ in different ranges of $y$, particularly for the cases when $y$ is relatively small or large. 

Here we present a simple idea to prove that $A(x,y)$ has a same size as $\Psi(x,y)$ when $y$ is small compared to $\log x$. Let  $n\leq \frac{x}{y}$ be a $y-$smooth number. If $n \leq \sqrt{x}$ then trivially we have $n\in A(x,y)$. Thus, we assume that $\sqrt{x} \leq n$.  Let $p_1\leq p_2\leq \cdots\leq p_k$ be prime factors of $n$. Consider the following sequence obtained by prime factors of $n$:  
$$n_0=1,\qquad n_j= \prod _{i=1}^{j} p_i, \qquad 1\leq j \leq k.$$
 Since $n\geq \sqrt{x}$ then there exists a unique integer $s$, with $0\leq s <k$ such that $n_s< \sqrt{x}\leq n_{s+1}$. Each prime factor of $n$ is less than $y$, therefore
 $$n_s \leq \sqrt{x} \leq n_{s+1}\leq n_s y.$$
 Set  $d=n_s$, then
 $$
\frac{\sqrt{x}}{y}\leq d\leq \sqrt{x}. 
 $$
 Since $n\leq x/y$, then we easily conclude that 
 $$
\frac{n}{d}\leq \sqrt{x}.
 $$
 Therefore,
 $$\Psi(x/y,y) \leq A(x,y) \leq \Psi(x,y),$$
and by a simple argument one can deduce that as $x,y \to \infty$ then $\Psi(x/y,y) \sim \Psi(x,y)$ when $y=o(\log x)$, (see Lemma \ref{smally}).
This argument leads us to state the following theorem.\\
 \begin{theorem}\label{thm3}
 If $y=o(\log x)$ then we have
 $$A(x,y) \sim \Psi(x,y) \quad \text{as} \quad x,y\rightarrow \infty.$$
 \end{theorem}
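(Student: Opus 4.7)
My plan is to combine the two-sided sandwich already established in the discussion preceding the statement with the auxiliary asymptotic of Lemma \ref{smally}. Explicitly, the text gives
$$\Psi(x/y, y) \leq A(x, y) \leq \Psi(x, y),$$
where the upper bound is immediate, since any product $ab$ with $a, b \in S(\sqrt{x}, y)$ is itself a $y$-smooth integer bounded by $x$, and the lower bound follows from the greedy prime-factorization argument that splits each $y$-smooth $n \leq x/y$ at the first partial product $n_s$ lying in $[\sqrt{x}/y,\,\sqrt{x}]$. Given these inequalities, the theorem reduces to the assertion that $\Psi(x/y, y) \sim \Psi(x, y)$ as $x, y \to \infty$ whenever $y = o(\log x)$, which is exactly the content of Lemma \ref{smally}.

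For that lemma I would invoke the Ennola-type asymptotic
$$\Psi(x, y) \sim \frac{1}{\pi(y)!}\prod_{p \leq y}\frac{\log x}{\log p},$$
valid in the small-$y$ regime. Forming the quotient with the counterpart at $x/y$ and simplifying gives
$$\frac{\Psi(x/y, y)}{\Psi(x, y)} \sim \left(1 - \frac{\log y}{\log x}\right)^{\pi(y)}.$$
Taking logarithms, expanding $\log(1-t) = -t + O(t^2)$, and applying the prime number theorem $\pi(y) \sim y/\log y$, the exponent is asymptotic to $-y/\log x$, which tends to zero precisely because $y = o(\log x)$. Hence the quotient tends to $1$, and combined with the sandwich this yields $A(x,y) \sim \Psi(x,y)$.

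The principal technical obstacle I foresee is justifying the Ennola-type asymptotic across the full range $y = o(\log x)$, since the classical statement is most often quoted in the narrower range $y \leq (\log x)^{1-\varepsilon}$. If a direct reference is insufficient, one can either appeal to the sharper Hildebrand--Tenenbaum saddle-point analysis, or argue elementarily by bounding $\Psi(x, y) - \Psi(x/y, y)$ as a count of $y$-smooth integers in the short interval $(x/y, x]$ via a Rankin-type upper bound. In either approach the decisive input is that the density of smooth numbers on multiplicative scales of order $y$ varies by a factor $1 + o(1)$ when $y = o(\log x)$, and only this input is needed to finish the proof.
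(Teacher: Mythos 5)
Your overall strategy matches the paper's exactly: establish the sandwich $\Psi(x/y,y)\leq A(x,y)\leq\Psi(x,y)$ via the greedy prime-partial-product argument and the trivial upper bound, then reduce to the asymptotic $\Psi(x/y,y)\sim\Psi(x,y)$ for $y=o(\log x)$. You also correctly identify the real obstacle: the Ennola asymptotic comes with the error term $O\!\bigl(y^2/(\log x\log y)\bigr)$, so it is reliable only when $y^2=o(\log x\log y)$ (your quoted threshold $y\leq(\log x)^{1-\varepsilon}$ is actually too generous; the genuine range is closer to $y=o(\sqrt{\log x\log_2 x})$). In the complementary range $(\log_2 x)^2\leq y=o(\log x)$ your quotient computation cannot be justified by Ennola alone, and this is precisely where the paper does something you did not spell out: it invokes the de La Br\`eteche--Tenenbaum local estimate (Theorem~\ref{locald<y}) with $d=y$, giving $\Psi(x/y,y)=\bigl(1+O(\log y/y)\bigr)\Psi(x,y)/y^{\alpha}$, and then uses the approximation~\eqref{alph'} for the saddle point $\alpha$ to show $y^{\alpha}=(1+y/\log x)^{1+O(1/\log y)}\to 1$. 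Your first suggested fallback (``Hildebrand--Tenenbaum saddle-point analysis'') points in exactly this direction and would close the gap; your second suggestion (a Rankin-type upper bound on the number of smooth integers in $(x/y,x]$) is less convincing, since Rankin's device naturally bounds $\Psi(x,y)$ itself and does not by itself produce the needed comparison between $\Psi(x,y)$ and $\Psi(x/y,y)$ without a matching lower bound. In short: the skeleton of your proof coincides with the paper's, your small-$y$ Ennola calculation is essentially the paper's case~$(ii)$, and the remaining case~$(i)$ is the place where you flagged a difficulty but did not carry out the de La Br\`eteche--Tenenbaum argument that the paper uses to finish.
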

 \bigskip
The problem gets harder, and hence, more interesting when $y$ takes larger values compared to $\log x$. We shall prove the following theorem for small values of $y$ compared to $x$.
\begin{theorem} \label{thm1} We have
$$A(x,y)\sim \Psi(x,y) \quad \text{as}\quad x,y\to \infty,$$
when $u$ and $y$ satisfy the range
\begin{equation}\label{range1}
\frac{u\log u}{(\log y \log_2 y \log_3 y)^{2}} \to \infty ,\quad \text{which implies}, \quad y\leq \exp \left\lbrace\frac{(\log x)^{1/3}}{(\log_2 x)^{1/3 +\epsilon}}\right\rbrace,
\end{equation}
for $\epsilon>0$ arbitrarily small.
\end{theorem}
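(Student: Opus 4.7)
The upper bound $A(x,y) \leq \Psi(x,y)$ is immediate, since any product of two $y$-smooth integers is itself $y$-smooth. The content of the theorem lies in the matching lower bound: I need to show that $(1-o(1))\Psi(x,y)$ of the $y$-smooth $n \leq x$ admit a factorization $n = ab$ with $a,b \leq \sqrt{x}$ (the $y$-smoothness of the factors is then automatic). Equivalently, almost every $y$-smooth $n \leq x$ should possess a divisor in the window $[n/\sqrt{x},\sqrt{x}]$, whose multiplicative length equals $x/n$.

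To produce such a divisor I rely on a partial-product chain built on the small prime factors of $n$, adapting the idea used in the proof of Theorem \ref{thm3}. For a smooth $n \leq x$, set
\[
n^{\sharp}=\prod_{\substack{p\mid n\\ p\leq x/n}}p^{v_{p}(n)},
\]
and list the primes of $n^{\sharp}$ with multiplicity as $q_{1}\leq q_{2}\leq\cdots\leq q_{m}$. The partial products $d_{j}=q_{1}\cdots q_{j}$ form a chain of divisors of $n$ from $1$ to $n^{\sharp}$ with consecutive ratios at most $x/n$. If $n^{\sharp}\geq\sqrt{x}$, then the smallest $j$ with $d_{j}\geq n/\sqrt{x}$ automatically satisfies $d_{j}\leq(n/\sqrt{x})(x/n)=\sqrt{x}$, delivering the desired divisor. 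Hence the ``bad'' integers are precisely the smooth $n\leq x$ with $n^{\sharp}<\sqrt{x}$, and the task reduces to showing that their count is $o(\Psi(x,y))$. Note that when $n<x/y$ one has $n^{\sharp}=n$, recovering the argument preceding Theorem \ref{thm3} as a special case.

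To estimate the bad set I would decompose dyadically by $t=\lfloor\log_{2}(x/n)\rfloor$, $0\leq t\leq\log_{2}y$. Within the class $n\in(x/2^{t+1},x/2^{t}]$, the rough cofactor $r=n/n^{\sharp}$ is composed only of primes in $(2^{t},y]$, satisfies $r>\sqrt{x}/2^{t+1}$, and is multiplied by a $2^{t}$-smooth integer $n^{\sharp}<\sqrt{x}$ to land in the dyadic window. Letting $B_{t}$ denote the bad count at level $t$, this gives
\[
B_{t}\ll\sum_{\substack{r\ (2^{t},y]\text{-smooth}\\ r>\sqrt{x}/2^{t+1}}}\Psi\!\left(\frac{x}{2^{t}r},\,2^{t}\right),
\]
where each inner $\Psi$-count is handled by Hildebrand-type asymptotics in the Dickman regime and by Ennola's formula when $2^{t}$ becomes small. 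Summing over $t$ and comparing with $\Psi(x,y)\sim x\rho(u)$, the goal is to bound the total by $o(\Psi(x,y))$.

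The main obstacle is the uniformity required across these regimes. The dominant levels should be those with $t$ close to $\log_{2}y$, where the chain resolution is coarsest and the target window shortest; there, one must show that the number of admissible rough cofactors $r$ decays quickly enough against the Dickman saddle shift $\rho(u-\log(2^{t}r)/\log y)/\rho(u)\approx(u\log u)^{\log(2^{t}r)/\log y}$. The hypothesis $u\log u/(\log y\,\log_{2}y\,\log_{3}y)^{2}\to\infty$ is tailored to provide exactly the margin that makes this balancing succeed, with the triply-logarithmic factors reflecting the cost of splicing the Ennola and Hildebrand estimates across $t$. Turning the above sketch into a full proof thus requires careful bookkeeping of $\Psi$-estimates in two very different parameter ranges, and this is where the bulk of the technical work will lie.
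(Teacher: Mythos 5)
Your reduction of the upper bound and your identification of the goal (produce a divisor of $n$ in the window $[n/\sqrt{x},\sqrt{x}]$) are correct, and the idea of building a chain of partial products is indeed one ingredient of the paper's proof. But the scheme as you set it up does not close, because the classification of ``bad'' integers as those with $n^{\sharp}<\sqrt{x}$ captures a \emph{positive proportion} of $S(x,y)$ in the range of Theorem~\ref{thm1}, not a vanishing one. Concretely, for any $n\in(x/2,x]$ one has $x/n<2$, so $n^{\sharp}$ is a power of $2$ (typically $O(1)$), hence $n^{\sharp}<\sqrt{x}$; and $\Psi(x,y)-\Psi(x/2,y)\sim\bigl(1-2^{-\alpha}\bigr)\Psi(x,y)$ by \eqref{d<x}, which is bounded away from $0$ since $\alpha$ is bounded below throughout the range \eqref{range1} (indeed $\alpha\to 1$ at the upper end $y=\exp\{(\log x)^{1/3}/(\log_2x)^{1/3+\epsilon}\}$). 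More generally, for $n>x/C$ with $C$ fixed, $n^{\sharp}$ is the $C$-smooth part of $n$, which is typically bounded, so all dyadic levels $t=O(1)$ contribute $\gg\Psi(x,y)$ to your $B_t$. The argument preceding Theorem~\ref{thm3} really does work only for $n\le x/y$, and the reason that suffices there is that $\Psi(x/y,y)\sim\Psi(x,y)$ when $y=o(\log x)$ (Lemma~\ref{smally}); for the larger $y$ of Theorem~\ref{thm1}, $\Psi(x/y,y)/\Psi(x,y)\asymp y^{-\alpha}$ is far from $1$, so one cannot afford to discard $n\in(x/y,x]$, and one certainly cannot discard $n\in(x/2,x]$.

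The missing idea is a mechanism to locate a divisor in a window of \emph{fixed} multiplicative length around $\sqrt{n}$ (rather than the shrinking window $[n/\sqrt{x},\sqrt{x}]$), and then to restrict attention to $n\le(1-\eta)x$ with $\eta\to0$ slowly, which loses only the negligible proportion $1-(1-\eta)^{\alpha}=o(1)$ of smooth $n$. A chain of partial products with consecutive ratios up to $y$ only resolves the divisor set of $n$ to precision $y$, so it cannot by itself hit a window of length $e^{\eta}$. The paper supplies the extra resolution by a second moment argument: it shows (Proposition~\ref{p1}, via Chebyshev's inequality and the estimate of Theorem~\ref{locald<y}) that almost all smooth $n$ have prime factors in each of the narrow intervals $J_i=[(1-\kappa)y^{1-1/2^i},y^{1-1/2^i}]$, $1\le i\le N$, together with $N$ primes in $J_\infty=[(1-\kappa)y,y]$. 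Multiplying selected $p_i\in J_i$ and $q_i\in J_\infty$ produces, for every $j\in\{0,\dots,2^N-1\}$, a divisor $D_j$ with $\log D_j$ within $O(N\kappa\log y)$ of $(N-j/2^N)\log y$ (Corollary~\ref{cor1}); composing $D_k$ with the appropriate coarse partial product $d_l$ of the complementary cofactor yields a divisor of $n$ within $e^{\pm\eta/2}$ of $\sqrt{n}$. It is exactly this ``fine adjustment by designer primes'' that your proposal lacks, and it is what produces the condition $u\log u/(\log y\log_2y\log_3y)^2\to\infty$ (which controls the second-moment error in Proposition~\ref{p1}), rather than any splicing of Ennola and Hildebrand regimes across $t$.
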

 Theorem~\ref{thm1} is proved in Section $3$. The proof relies on some probabilistic arguments and recent estimates for $\Psi(x/p,y)$ where $p$ is a prime factor of $n$.\\\\

If $y$ takes values very close to $x$, which implies $u$ is small compared to $\log \log y$, then we will show the following theorem.
 \begin{theorem}\label{thm2}
Let $\epsilon>0$ is arbitrarily small, then we have
$$A(x,y)= o(\Psi(x,y)) \quad \text{as} \quad x,y\to \infty,$$
where $u$ and $y$ satisfying the range
\begin{equation} \label{range2}
u < (L-\epsilon)\log_2 y, \quad \text{which implies}, \quad y \geq \exp \left\lbrace\frac{\log x}{(L-\epsilon)\log_2 x} \right\rbrace,
\end{equation}
where $L:= \frac{1-\log 2}{\log 2}$.
\end{theorem}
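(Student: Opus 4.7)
The plan is to exploit two necessary conditions for an integer to lie in $A(x,y)$ and to show that, in the stated range, the $y$-smooth integers satisfying them form a negligible proportion of $\Psi(x,y)$. Writing $n=ab$ with $a,b\le\sqrt{x}$, the largest prime factor $P(n)$ must divide one of $a,b$, forcing $P(n)\le\sqrt{x}$; and if $n>\sqrt{x}$, choosing $a\le b$ yields $a\in[n/\sqrt{x},\sqrt{n}]$, so $n$ admits a divisor in $(n/\sqrt{x},\sqrt{x}]$. Combining these observations with a dyadic decomposition in both $n$ and the divisor gives
\[
A(x,y)\le\Psi(\sqrt{x},y)+\sum_{k\ge0}H\!\left(\tfrac{x}{2^{k}},y;\tfrac{\sqrt{x}}{2^{k+1}},\tfrac{\sqrt{x}}{2^{k}}\right),
\]
where $H(x,y;z,2z)$ denotes the number of $y$-smooth $n\le x$ possessing a divisor in $(z,2z]$. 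The term $\Psi(\sqrt{x},y)$ is $o(\Psi(x,y))$ by the Hildebrand--Tenenbaum saddle-point ratio for $\Psi$, so the theorem reduces to proving $H(x,y;z,2z)=o(\Psi(x,y))$ uniformly for $z$ of order $\sqrt{x}$.

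I would attack $H(x,y;z,2z)$ by conditioning on $P(n)=p$ and writing $n=pm$. The divisors of $n$ in $(z,2z]$ are either divisors $d\mid m$ with $d\in(z,2z]$, or products $pd$ with $d\mid m$ and $d\in(z/p,2z/p]$, which reduces the question to counting $y$-smooth $m$ with a divisor in a dyadic interval and then summing over primes $p\le y$. Invoking the ratio estimate $\Psi(x/p,p)/\Psi(x,y)\asymp p^{-\alpha(x,y)}$, with $\alpha(x,y)$ the saddle-point parameter, this sum can be interpreted probabilistically: the Dickman-type measure on $\log p/\log y$ governs how often a subset-sum of $\{\log p:p\mid n\}$ falls into a window of width $\log 2$. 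In the range $u<(L-\varepsilon)\log_{2}y$, where $\alpha$ is close to $1$ and the measure on $\log p/\log y$ is sufficiently close to uniform on $[0,1]$, this probability is quantitatively $o(1)$.

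The main obstacle is the smooth analogue of Ford's theorem on divisors in short intervals. In the non-smooth setting the key structural input is that $\log p/\log n$ is asymptotically uniform for a typical prime factor of a random integer; for smooth integers the distribution is twisted by the Dickman density, and tracking this twist through the concentration step is what will produce the cutoff $L=(1-\log 2)/\log 2$. Concretely, $L$ arises by comparing $\rho(2)=1-\log 2$, the density controlling the trivial bound $A(x,y)\le\Psi(x,\sqrt{x})$ (valid because $P(n)\le\sqrt{x}$ for every $n\in A(x,y)$), against the logarithmic length $\log 2$ of a single dyadic scale. Verifying that the sharp threshold is $L\log_{2}y$, rather than a weaker power of $\log_{2}y$, will require the full Hildebrand--Tenenbaum asymptotic for $\Psi(x,y)$ with sharp error terms, together with careful bookkeeping of the $p$-sum. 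I expect this calibration of $L$ to be the heart of the argument, while the reduction to $H(x,y;z,2z)$ and the saddle-point manipulations are essentially routine.
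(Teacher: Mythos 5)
Your proposal takes a genuinely different route from the paper, but it has a real gap: you reduce to an unproved core estimate and stop. The paper does not go through the function $H(x,y;z,2z)$ (counting $y$-smooth $n\le x$ with a divisor in a dyadic window) at all. Instead it follows Erd\H{o}s' original multiplication-table argument, which is more elementary. It introduces the truncated count $\Omega_z(n)=\sum_{p^v\|n,\,p\le z}v$ and the quantities $N_k(x,y,z)=\#\{n\in S(x,y):\Omega_z(n)=k\}$, proves by induction (using Saias' bound $\theta(x,y,z)\ll\Psi(x,y)/\log z$ and the local estimate $\Psi(x/p,y)\sim\Psi(x,y)/p^\alpha$) a Hardy--Ramanujan-type inequality $N_k(x,y,z)\ll\frac{\Psi(x,y)}{\log z}\frac{(\log_2 z+B)^k}{k!}$, and then inserts this into the Erd\H{o}s min-bound $A(x,y)\le\sum_k\min\{N_k(x,y,z),\sum_j N_j(\sqrt{x},y,z)N_{k-j}(\sqrt{x},y,z)\}$. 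The cutoff $L=(1-\log 2)/\log 2$ then emerges from the interplay between the Stirling estimate for the two sums and the ratio $\Psi^2(\sqrt{x},y)/\Psi(x,y)\asymp(\log z)^\lambda$, with $\log_2 z=(\log 2/\lambda)u$.

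The gap in your proposal is that the reduction to $\sum_k H(x/2^k,y;\sqrt{x}/2^{k+1},\sqrt{x}/2^k)$, while correct, simply relocates the entire difficulty. You acknowledge this explicitly: you would need a smooth analogue of Ford's theorem, i.e., $H(x,y;z,2z)=o(\Psi(x,y))$ uniformly for $z\asymp\sqrt{x}$ in the claimed range, but you neither prove it nor cite a usable result. That bound is not available off the shelf (Ford's $H(x,y,2y)\asymp x/(\log x)^\delta(\log_2 x)^{3/2}$ concerns all integers, and the smooth weighting genuinely changes the concentration calculation), and it is precisely the content of the theorem, so asserting that it should follow from ``careful bookkeeping'' leaves the proof unfinished. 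In addition, your heuristic locating the constant $L$ at $\rho(2)/\log 2$ is a plausible mnemonic but is not how $L$ actually arises in a working argument; in the paper it comes from the zero of the rate function $G(H)=1+H\log H-H$ at $H=1$ combined with the relation $H=(1-\lambda)/\log 2$ and $\lambda<1-\log 2$, so the calibration is tied to the number-of-prime-factors threshold rather than to the Dickman value $\rho(2)$ per se. If you want to pursue the $H(x,y;z,2z)$ route you would first have to prove the truncated-prime-factor (or an equivalent uniform-distribution) estimate that the paper's Lemma on $N_k(x,y,z)$ provides; absent that, the proposal is a program, not a proof.
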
  
Theorem \ref{thm2} is proved in Section $4$, by applying an Erd\H{o}s' idea~\cite{erdos}, suitably modified for $y-$smooth integers.\\ \\
In what follows, we will give a heuristic argument that predicts the behaviour of $A(x,y)$ in ranges \eqref{range1} and \eqref{range2}.\\
We define the function $\tau(n; A,B)$ to be the number of all divisors of $n$ in the interval $(A,B]$. In other words.
$$\tau(n; A,B) :=\# \{d: d|n \Rightarrow A< d \leq B\}. $$
Let $n\in S((1-\eta)x,y)$ be a square-free number with $k$ prime factors, where $\eta \to 0$ as $x\to \infty$. Assume that the set 
$$D(n):=\{\log d: d|n\}$$ 
is uniformly distributed in the interval $[0, \log n]$. So 
\begin{equation}
P\left(d\in (A,B)\right):=\tau(n)  \frac{\log B- \log A}{\log n},
\end{equation}
where the sample space is defined by
$$S:= \left\lbrace n\leq x: \omega(n)= k \right\rbrace,$$
and $n$ being chosen uniformly at random. By this assumption, the expected value of the function $\tau(n,(1-\eta)\sqrt{x},\sqrt{x})$ is as follows
\begin{equation}\label{Exp1}
\mathbb{E} \left[\tau(n,(1-\eta)\sqrt{x},\sqrt{x})\right]=
\frac{2^k \log (1/(1-\eta))}{\log \sqrt{x}} \asymp \frac{2^{k}}{u\log y}.
\end{equation}
 Alladi and Hildebrand in ~\cite{Alladi} and ~\cite{Hild} showed that the normal number of prime factors of $y-$smooth integers is very close to its expected value $u+\log_2 y$ in different ranges of $y$. Hence, from~\eqref{Exp1}, we deduce that
 $$\mathbb{E} \left[\tau(n,(1-\eta)\sqrt{x},\sqrt{x})\right] \asymp \frac{2^{u+\log_2 y}}{\log y}.$$
If $2^{u+\log_2 y}/\log y\to \infty$, then we expect that $n$ will have a divisor $d$ in the interval $((1-\eta)\sqrt{x},\sqrt{x}]$.\\ We know $n\leq (1-\eta) x$. Thus, $n/d \leq \sqrt{x}$, and we can deduce that $n\in A(x,y)$, this means that 
$$\Psi((1-\eta)x,y) \leq A(x,y).$$
Trivially $A(x,y)\leq \Psi(x,y)$. So by  this argument, we obtain 
$$
A(x,y)\sim \Psi(x,y),
$$
when $\eta \to 0$ as $x\to \infty$. \\\\
On the other hand, if  $2^{u+\log_2 y}/\log y \to 0$, then we expect that none of integers in  $S((1-\eta)x,y)$ have a divisor in $((1-\eta)\sqrt{x}, \sqrt{x}]$ (except a set with density $0$), this means that 
$$A(x,y)=o(\Psi(x,y)) \quad \text{as} \quad x,y\to \infty.$$
This heuristic gives an evidence for the following conjecture: \\
\begin{conj}\label{conj} If $L:= \frac{1-\log 2}{\log 2}$, then we have the following dichotomy
\begin{enumerate}
\item: If $u-L\log_2 y\to+\infty$, which implies 
$$y\leq \exp \left\lbrace\frac{\log x}{L\log_2 x}\right\rbrace,$$ 
 Then, we have
$$
A(x,y)\sim \Psi(x,y) \quad \text{as} \quad x,y\to \infty.
$$
\item: If $u- L\log_2 y \to {-\infty}$, which implies that for small $\epsilon>0$
 $$y\geq \exp\left\lbrace\frac{\log x}{(L-\epsilon)\log_2 x}\right\rbrace,$$ 
Then, we have 
$$
A(x,y)=o(\Psi(x,y)) \quad \text{as} \quad x,y\to \infty .
$$
\end{enumerate}  
\end{conj}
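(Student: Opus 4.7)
The plan is to make the heuristic preceding Conjecture \ref{conj} rigorous, treating the two parts separately. Part (2) essentially asks for a quantitative sharpening of Theorem \ref{thm2}, so the bulk of the work lies in strengthening Theorem \ref{thm1} from the range $u\log u \gg (\log y\,\log_2 y\,\log_3 y)^2$ up to the optimal range $u-L\log_2 y\to+\infty$.

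For part (1), the target is to show that for some $\eta=\eta(x)\to 0$, almost every $n\in S((1-\eta)x,y)$ has a divisor in the short interval $((1-\eta)\sqrt{x},\sqrt{x}]$; combining this with the elementary identity argument given in the introduction then upgrades $\Psi((1-\eta)x,y)$ into a lower bound for $A(x,y)$ and yields $A(x,y)\sim\Psi(x,y)$. The natural approach is a second-moment calculation on the divisor-counting function $\tau(n;A,B)$ with $A=(1-\eta)\sqrt{x}$ and $B=\sqrt{x}$. The first moment is handled by switching the order of summation,
\begin{equation*}
\sum_{n\in S(x,y)}\tau(n;A,B)\;=\;\sum_{\substack{A<d\leq B\\ P(d)\leq y}}\Psi(x/d,y),
\end{equation*}
and standard estimates for the ratio $\Psi(x/d,y)/\Psi(x,y)$ (as in \cite{can}) should yield an average order of size $2^{\omega_{0}}/\log y$, where $\omega_{0}=u+\log_2 y$ is the Alladi--Hildebrand normal order of $\omega(n)$ on $S(x,y)$; this matches the heuristic and blows up precisely on the conjectured boundary.

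The crux of the matter is the matching second-moment bound
\begin{equation*}
\sum_{n\in S(x,y)}\tau(n;A,B)^{2}\;=\;\sum_{\substack{A<d_{1},d_{2}\leq B\\ P(d_{i})\leq y}}\Psi\!\left(\tfrac{x}{\mathrm{lcm}(d_{1},d_{2})},\,y\right),
\end{equation*}
which must be shown to be dominated by the diagonal $d_{1}=d_{2}$ so that Chebyshev delivers $\tau(n;A,B)\geq 1$ for almost all $n$. The hard part will be exactly this off-diagonal control: divisors of a typical integer cluster on the logarithmic scale, pairs with $\log(d_{2}/d_{1})$ small produce the dominant mass, and taming them near the transition $u\sim L\log_2 y$ appears to require a smooth analogue of Ford's delicate treatment in \cite{y2y}. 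I would attempt to combine the Hildebrand-type probabilistic model for the exponent vector of a random $y$-smooth integer with the $L^{2}$ techniques of \cite{y2y}, first excising the negligible exceptional set of $n$ where $\omega(n)$ deviates from $\omega_{0}$ by more than, say, $\sqrt{\omega_{0}\log\omega_{0}}$, or where $P(n)$ is anomalously small.

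For part (2), Theorem \ref{thm2} already handles $u<(L-\epsilon)\log_2 y$ for any fixed $\epsilon>0$; closing the gap to $u-L\log_2 y\to-\infty$ should be a quantitative refinement of the Erd\H{o}s-type double-counting argument of Section 4, letting $\epsilon=\epsilon(x)\to 0$ slowly enough that the main bound still absorbs the lower-order error terms. I expect this to be technically routine compared to part (1), so in my view the genuine difficulty of the conjecture is concentrated in the off-diagonal second-moment analysis above, exactly at the phase transition $u=L\log_2 y$.
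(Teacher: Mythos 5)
This statement is an open conjecture, not a theorem; the paper does not prove it. The preceding paragraph in the text gives only a heuristic derivation of the threshold $u = L\log_2 y$, based on the \emph{modelling assumption} that $D(n)=\{\log d : d\mid n\}$ is uniformly distributed in $[0,\log n]$, and Theorems~\ref{thm1} and~\ref{thm2} are partial results in strictly weaker ranges. Your write-up is likewise a plan rather than a proof: you explicitly defer the decisive step (``off-diagonal control\ldots appears to require a smooth analogue of Ford's delicate treatment''), so there is nothing to check against a proof that does not exist. That said, two conceptual points in your sketch deserve pushback because they would derail a serious attempt.

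First, the quantity $\Psi(x,y)^{-1}\sum_{n\in S(x,y)}\tau(n;A,B)=\sum_{A<d\leq B,\,P(d)\leq y}\Psi(x/d,y)/\Psi(x,y)$ is \emph{not} the same as the heuristic expectation $2^{\omega_0}/\log y$ appearing in \eqref{Exp1}. The heuristic is a conditional expectation under a uniform-spacing model for $D(n)$ at fixed $\omega(n)$; the genuine first moment reflects the actual (highly clustered) distribution of divisors and, after an application of Theorem~\ref{locald<y}, comes out to a quantity of a completely different shape in which $2^{\omega_0}$ does not appear. Indeed, in the classical ($y=x$) case the true mean of $\tau(n;y,2y)$ is $\asymp 1$ for all $x$, while the proportion of $n$ possessing such a divisor tends to $0$ like $(\log x)^{-\delta}(\log_2 x)^{-3/2}$ --- the conjectured threshold here is of the latter type, and the true first moment simply does not see it. Second, a Chebyshev argument on $\tau(n;A,B)$ directly would require $\operatorname{Var}(\tau)=o(\mathbb{E}[\tau]^2)$, which is exactly what clustering of divisors typically prevents; Ford's machinery for $H(x,y,2y)$ was built precisely because this variance bound fails, and it yields positive-proportion results, not almost-all results, so it is not a plug-in replacement. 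Notice that the paper's proof of Theorem~\ref{thm1} deliberately avoids this pitfall: it applies Chebyshev not to the divisor count $\tau$ but to the prime counts $\omega_i(n)$ in the narrow intervals $J_i$ (Lemmas~\ref{main-lemma}--\ref{Mumagnitude} and Proposition~\ref{p1}), where the primes behave quasi-independently and the variance really is of lower order than the mean squared, and only afterward manufactures the needed divisor combinatorially via Corollary~\ref{cor1}. If you want to extend the range toward $u-L\log_2y\to+\infty$, pushing that combinatorial construction --- shrinking $\eta$, refining the interval system $J_i$, and tightening the use of \eqref{d<x} --- is the natural continuation of the paper's method, rather than a fresh second-moment attack on $\tau$. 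Your sketch for part~(2) (letting $\epsilon\to 0$ slowly in the argument of Section~4) is plausible but far from routine: the constants $A$, $B$ in Lemma~\ref{omegaz} and the exponent $G(H)$ all depend on $\lambda$, and one would have to verify uniformity in $\lambda$ as $\lambda\to 1-\log 2$, which the present proof does not address.
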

Theorem \ref{thm1} and Theorem \ref{thm2} are in the direction of the first case and the second case of Conjecture \eqref{conj} respectively, but the claimed ranges in the conjecture are stronger than the claimed ranges in Theorem \ref{thm1} and Theorem \ref{thm2}, and the reason stems from uniformity assumption about $D(n)$.
\begin{center}Acknowledgement 
\end{center}
I would like to thank Andrew Granville and Dimitris Koukoulopoulos for all their advice and encouragement as well as their valuable comments on the earlier version of the present paper. I am also grateful to Sary Drappeau, Farzad Aryan and Oleksiy Klurman for helpful conversations.  
\section{Preliminaries} \label{Preliminaries}
In this section, we review some results used in the proof of our main theorems. We first fix some notation.
In this chapter $\rho(u)$ is the Dickman-de Bruijn function, as we defined in the introduction. By ~\cite[3.9]{Andrewsmooth} we have the following estimate for $\rho(u)$
 \begin{equation}\label{rho}
 \rho(u) = \left(\frac{e+o(1)}{u\log u}\right)^u \quad \text{as} \quad u\rightarrow \infty.
 \end{equation}
 \bigskip
\begin{theorem}[Hildebrand~\cite{Hil}] The estimate 
\begin{equation}\label{psi}
\Psi(x,y)=x\rho(u)\left(1+O_\epsilon\left(\frac{\log (u+1)}{\log y}\right)\right)
\end{equation}
holds uniformly in the range
\begin{equation}\label{smallu}
x\geq 3,\qquad 1\leq u \leq \frac{\log x}{(\log_2 x)^{\frac{5}{3}+\epsilon}}, \quad \text{that is,} \quad y\geq \exp\left((\log_2 x)^{\frac{5}{3}+\epsilon}\right),
\end{equation}
where $\epsilon$ is any fixed positive number.
\end{theorem}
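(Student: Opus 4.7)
The plan is to derive a functional identity for $\Psi(x,y)$ (Hildebrand's identity), pair it with the analogous relation for the approximation $x\rho(u)$, and close an induction on $u$ using de Bruijn's earlier (narrower-range) estimate as the base case. The first step is to expand $\log n = \sum_{p^{k}\|n} k\log p$ for each $n\in S(x,y)$, sum over $n\le x$, and apply Abel summation, which produces
\begin{equation*}
\Psi(x,y)\log x = \int_{1}^{x}\frac{\Psi(t,y)}{t}\,dt + \sum_{p\le y}\log p\sum_{m\ge 1,\,p^{m}\le x}\Psi(x/p^{m},y).
\end{equation*}
A parallel computation, starting from the Dickman delay equation $u\rho'(u)=-\rho(u-1)$ (equivalently $u\rho(u)=\int_{u-1}^{u}\rho(v)\,dv$), produces the same identity with $\Psi(t,y)$ replaced throughout by the smooth approximation $t\mapsto t\rho(\log t/\log y)$, up to a small discrepancy coming from discretizing a continuous prime-density integral.

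The second step is to handle the prime-power sum. Writing the inner sum as a Stieltjes integral against the Chebyshev function $\theta(t)$, one invokes the prime number theorem in the Vinogradov--Korobov form $\theta(t)=t+O\!\left(t\exp(-c(\log t)^{3/5-\epsilon})\right)$ to replace it by the continuous integral $\int_{1}^{y}\Psi(x/t,y)\,dt/t$; the contributions of higher prime powers $m\ge 2$ are absorbed using trivial upper bounds for $\Psi$. Setting $R(x,y):=\Psi(x,y)-x\rho(u)$ and subtracting the two functional identities produces an integral inequality roughly of the shape
\begin{equation*}
|R(x,y)|\log x \ll \int_{1}^{x}\frac{|R(t,y)|}{t}\,dt + \int_{1}^{y}|R(x/t,y)|\frac{dt}{t} + x\rho(u)\,E(x,y),
\end{equation*}
in which $E(x,y)$ collects the Vinogradov--Korobov saving together with the unavoidable loss of order $\log(u+1)/\log y$ coming from approximating $\rho$ on intervals of relative length $1/\log y$.

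The third step is the induction on $u$: with de Bruijn's estimate serving as the base case on a narrow initial range, one exploits the smoothness relation $\rho(u-v)\asymp u^{v}\rho(u)$ for bounded $v$ (immediate from $u\rho'(u)/\rho(u)\sim -\log u$) to propagate the bound on $|R|/(x\rho(u))$ across one unit of $u$ at a time. The main obstacle, and the feature that pins down the precise range $u\le \log x/(\log_{2}x)^{5/3+\epsilon}$, is the balancing inside $E(x,y)$: the Vinogradov--Korobov saving $\exp(-c(\log y)^{3/5-\epsilon})$ obtained in step two must absorb the cumulative loss incurred over roughly $u$ iterations of the induction, and the crossover exponent $5/3=1/(3/5)$ is precisely the one at which the two effects balance. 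Any weakening of the zero-free region would shrink the admissible range, while a further extension would require new progress on zero-free regions for $\zeta$.
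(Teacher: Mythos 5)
The paper states this result as a cited theorem (Hildebrand~\cite{Hil}) and gives no proof of its own, so there is no internal argument to compare against. Your outline is a faithful description of Hildebrand's original proof: the Buchstab-type functional identity obtained by Abel summation of $\log n$ over $y$-smooth $n\le x$, the parallel identity for $x\rho(u)$ derived from the Dickman delay-differential equation, the replacement of the prime sum by an integral using the Vinogradov--Korobov form of the prime number theorem, and the induction on $\lfloor u\rfloor$ that propagates the bound with de Bruijn's narrower-range estimate as the base case; your remark that the exponent $5/3$ is the reciprocal of the $3/5$ in the zero-free region is the correct heuristic for why the admissible range terminates exactly there.
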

Combining \eqref{psi} with the asymptotic formula \eqref{rho}, one can arrive at the following simple corollary
\bigskip
\begin{corollary}\label{simplepsi}
We have
$$\Psi(x,y) = xu^{-(u+o(u))},$$
as $y$ and $u$ tend to infinity, uniformly in the range \eqref{smallu}, for any fixed $\epsilon>0$.
\end{corollary}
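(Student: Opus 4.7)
The plan is to combine the two displayed asymptotics in a purely formal way and check that the error terms collapse into the form $u^{-(u+o(u))}$. The main (and really only) obstacle is bookkeeping of the error terms.

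First, I would take the logarithm of the estimate \eqref{rho} for the Dickman--de Bruijn function. Writing
\[
\log \rho(u) = u\log\!\left(\tfrac{e+o(1)}{u\log u}\right) = u\bigl(1 - \log u - \log_2 u + o(1)\bigr),
\]
and factoring out $-\log u$, I would observe that
\[
\frac{\log \rho(u)}{-\log u} = u\left(1 + \frac{\log_2 u}{\log u} - \frac{1}{\log u} + o\!\left(\tfrac{1}{\log u}\right)\right) = u + o(u),
\]
since $\log_2 u / \log u \to 0$ as $u\to\infty$. Exponentiating gives the clean intermediate formula
\[
\rho(u) = u^{-(u+o(u))} \qquad (u\to\infty).
\]

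Second, I would insert this into Hildebrand's estimate \eqref{psi}. In the range \eqref{smallu} we have $\log y \geq (\log_2 x)^{5/3+\epsilon}$ and $u \leq \log x$, so $\log(u+1) \leq \log_2 x + O(1)$. Consequently
\[
\frac{\log(u+1)}{\log y} \ll \frac{\log_2 x}{(\log_2 x)^{5/3+\epsilon}} \longrightarrow 0,
\]
and therefore Hildebrand's formula simplifies to $\Psi(x,y) = x\rho(u)(1+o(1))$.

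Finally, I would check that the factor $1+o(1)$ is absorbed into the exponent. Taking logarithms,
\[
\log\!\frac{\Psi(x,y)}{x} = -(u+o(u))\log u + o(1) = -(u+o(u))\log u,
\]
because $o(1)/\log u = o(1) = o(u)$ for $u\to\infty$. Exponentiating yields the claimed identity $\Psi(x,y) = x u^{-(u+o(u))}$ uniformly in the asserted range. As mentioned, there is no substantive obstacle here; the only subtlety worth flagging is that the uniformity in $u$ of the relation $\rho(u) = u^{-(u+o(u))}$ must be tracked so that the final $o(u)$ is uniform on the range \eqref{smallu}, which is immediate from the uniform error term in \eqref{rho}.
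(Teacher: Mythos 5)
Your proposal is correct and follows exactly the route the paper intends: combine Hildebrand's estimate \eqref{psi} with the asymptotic \eqref{rho} for $\rho(u)$, observe that the relative error in \eqref{psi} is $o(1)$ in the range \eqref{smallu}, and absorb all lower-order terms into the $o(u)$ in the exponent. The paper states this only as an unproved ``simple corollary,'' and your bookkeeping (checking $\log_2 u/\log u \to 0$ and that the factor $1+o(1)$ contributes only $o(1)$, hence $o(u\log u)$, to the logarithm) is precisely the calculation being taken for granted.
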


We will apply this estimate in the proof of Theorem $\ref{thm2}$. However this estimate of $\Psi(x,y)$ is not very sharp for large values of $u$, for which the saddle point method is more effective.\\
Let $\alpha:=\alpha(x,y)$ be a real number satisfying
\begin{equation}\label{alpha}
\sum_{p\leq y} \frac{\log p}{p^{\alpha}-1}=\log x.
\end{equation} 
One can show that $\alpha$ is unique. This function will play an essential role in this work, so we briefly recall some fundamental facts of this function that are used frequently. By~\cite[Lemma 3.1]{TenDe} we have the following estimates for $\alpha$.

\begin{equation}\label{alph-general}
\alpha(x,y)= \frac{\log \left(1+y/\log x\right)}{\log y} \left\{1+O\left(\frac{\log_2 y}{\log y}\right)\right\} \quad \quad x\geq y\geq 2.
\end{equation}
For any $\epsilon>0$, we have the particular cases
\begin{equation}\label{alph}
\alpha(x,y)= 1-\frac{\xi(u)}{\log y} +O\left(\frac{1}{L_\epsilon (y)}+ \frac{1}{u(\log y)^2}\right) \qquad \text{if}\,\,\, y\geq (\log x)^{1+\epsilon},
\end{equation}
where 
\begin{equation}\label{L_epsilon}
L_\epsilon(y)=\exp\left\{(\log y)^{3/5-\epsilon}\right\},
\end{equation}
and $\xi(t)$ is the unique real non-zero root of the equation
\begin{equation}\label{XI}
e^{\xi(t)}= 1+t\xi(t).
\end{equation}
Also for small values of $y$, we have
\begin{equation}\label{alph'}
\alpha(x,y)= \frac{\log (1+\frac{y}{\log x})}{\log y} \left\lbrace1+O\left(\frac{1}{\log y}\right)\right\rbrace \,\,\,\text{if}\,\,\, 2\leq y\leq (\log x)^2.
\end{equation}


We now turn to another ingredient related to the behaviour of $\Psi(x,y)$. The following estimate is a special case of a general result of
de La Breteche and Tenenbaum~\cite[Theorem 2.4]{TenDe}. 
\bigskip
\begin{theorem}\label{locald<y}

If $d\leq y$, then uniformly for $x\geq y \geq 2$ we have
\begin{equation}\label{d<x}
\Psi(x/d,y) =\left\lbrace1+ O\left(\frac{1}{u}+\frac{\log y}{y}\right)\right\rbrace \frac{\Psi(x,y)}{d^\alpha}.
\end{equation}
\end{theorem}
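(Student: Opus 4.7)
My plan is to invoke the saddle point (Hildebrand--Tenenbaum) machinery, whose key tool is the Mellin--Perron representation
$$
\Psi(z,y)=\frac{1}{2\pi i}\int_{\kappa-i\infty}^{\kappa+i\infty}\zeta(s,y)\frac{z^{s}}{s}\,ds,\qquad \zeta(s,y):=\prod_{p\le y}\bigl(1-p^{-s}\bigr)^{-1},
$$
together with the saddle point asymptotic
$$
\Psi(z,y)=\frac{z^{\alpha}\zeta(\alpha,y)}{\alpha\sigma\sqrt{2\pi}}\Bigl\{1+O\bigl(\tfrac{1}{u}+\tfrac{\log y}{y}\bigr)\Bigr\},
$$
where $\alpha=\alpha(z,y)$ is the saddle defined by \eqref{alpha} and $\sigma^{2}=\sum_{p\le y}p^{\alpha}(\log p)^{2}/(p^{\alpha}-1)^{2}$. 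The main idea is to apply the Mellin representation to $\Psi(x/d,y)$ with the contour placed on the vertical line through the saddle point $\alpha=\alpha(x,y)$ attached to $x$ itself (\emph{not} to $x/d$); factoring $d^{-\alpha}$ out of the integrand gives
$$
d^{\alpha}\Psi(x/d,y)=\frac{1}{2\pi i}\int_{\alpha-i\infty}^{\alpha+i\infty}\zeta(s,y)\frac{x^{s}}{s}\,d^{\alpha-s}\,ds.
$$

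On the central segment $|\tau|\le T$ with $\tau=\mathrm{Im}\,s$, the usual Taylor expansion about $s=\alpha$ yields $\zeta(s,y)x^{s}\approx \zeta(\alpha,y)x^{\alpha}e^{-\tau^{2}\sigma^{2}/2}$, reproducing the Gaussian that drives the saddle point asymptotic for $\Psi(x,y)$. The extra factor coming from the division by $d$ expands as
$$
d^{\alpha-s}=d^{-i\tau}=1-i\tau\log d-\tfrac{1}{2}\tau^{2}\log^{2}d+O(|\tau|^{3}\log^{3}d).
$$
The odd linear term vanishes by symmetry against the Gaussian, while integrating the quadratic term produces a multiplicative factor $1-\log^{2}d/(2\sigma^{2})+\cdots$. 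Since the hypothesis $d\le y$ forces $\log d\le \log y$ and the standard size estimates for $\sigma$ in the Hildebrand--Tenenbaum theory make this correction of order $O(1/u)$, it is absorbed into the claimed error. The tail contribution $|\tau|>T$ is controlled via the same bounds on $|\zeta(\alpha+i\tau,y)|$ that underpin the saddle point asymptotic itself, accounting for the remaining $O(\log y/y)$ piece.

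Comparing with the saddle point asymptotic applied at $z=x$ then yields $d^{\alpha}\Psi(x/d,y)=\Psi(x,y)\bigl\{1+O(1/u+\log y/y)\bigr\}$, which is the stated identity. The main technical obstacle is the quantitative control of $|\zeta(\alpha+i\tau,y)|$ away from the saddle; this is already the heart of the Hildebrand--Tenenbaum theory, but no new analytic input is needed here, because $d\le y$ guarantees that the shift $\log d$ remains small compared with the natural Gaussian width $1/\sigma$ at the saddle.
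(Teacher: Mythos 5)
The paper does not prove this statement; it is cited directly as a special case of de la Bret\`eche--Tenenbaum~\cite[Theorem~2.4]{TenDe}, so there is no internal proof to compare against. Your sketch would supply an actual proof, and it captures the right saddle-point strategy. The key move --- evaluating $\Psi(x/d,y)$ on the contour through $\alpha=\alpha(x,y)$ rather than through $\alpha(x/d,y)$, so that after extracting the modulus-one factor $d^{\alpha-s}=d^{-i\tau}$ one is exactly at the saddle of $\zeta(s,y)x^s/s$ --- neatly avoids having to compare two different saddle points, and the estimate $(\log d)^2/\sigma^2\le(\log y)^2/\sigma^2\ll 1/u$, using $\sigma^2\gg u(\log y)^2$ in the relevant range, does push the leading correction into the stated error. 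Two points are compressed in a way you would need to justify before calling this a proof. First, the linear term $-i\tau\log d$ does not simply ``vanish by symmetry'': it contributes through even cross-terms against the odd parts of the integrand, namely the cubic term in the Taylor expansion of $\log\bigl(\zeta(\alpha+i\tau,y)x^{i\tau}\bigr)$ and the $-i\tau/\alpha$ term from expanding $1/s$; these produce corrections of the rough size $\log d/(\alpha\,\sigma^2)$ and $\phi_3\log d/\sigma^4$ (with $\phi_3$ the third logarithmic derivative), which for $d\le y$ are again $O(1/u)$, but the cancellation is not automatic and has to be checked. Second, the closing sentence has the comparison backwards: the requirement is that $\log d$ be small compared with $\sigma$ (not with $1/\sigma$), since the effective $\tau$-range of the Gaussian is $|\tau|\lesssim 1/\sigma$ and $d^{-i\tau}$ is harmless there precisely when $\log d/\sigma\ll 1$; this holds because $\log d\le\log y$ while $\sigma\gg\sqrt{u}\,\log y$. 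With those two items tightened, your argument reconstructs, in the restricted case $d\le y$, essentially the proof that de la Bret\`eche and Tenenbaum give in greater generality.
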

\bigskip


 
 We can deduce the following lemma by Theorem \ref{locald<y} which completes the proof of Theorem \ref{thm3} 
\begin{lemma}\label{smally}
If $y\geq 2$ and $y=o(\log x)$, then we have
\begin{equation}
\Psi(x/y, y)\sim \Psi(x,y) \quad \text{as} \quad x\to \infty.
\end{equation}
\end{lemma}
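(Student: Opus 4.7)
The plan is to apply Theorem~\ref{locald<y} directly with $d=y$, which is permissible since the hypothesis $d \leq y$ is satisfied with equality. This gives
$$\frac{\Psi(x/y,y)}{\Psi(x,y)} = \left\{1 + O\!\left(\frac{1}{u} + \frac{\log y}{y}\right)\right\}\frac{1}{y^{\alpha}},$$
so the task reduces to showing that the error term is $o(1)$ and that $y^{\alpha} \to 1$.

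For the error term, the assumption $y = o(\log x)$ forces $\log y \leq \log_2 x + O(1)$, hence
$$u = \frac{\log x}{\log y} \geq \frac{\log x}{\log_2 x + O(1)} \longrightarrow \infty,$$
so $1/u = o(1)$. When $y \to \infty$ we also have $\log y/y \to 0$, and both pieces of the error vanish. For $y^{\alpha}$, the condition $y = o(\log x)$ places us in the range $2 \leq y \leq (\log x)^{2}$ for $x$ large, so estimate~\eqref{alph'} applies and yields
$$\alpha \log y = \log\!\left(1 + \frac{y}{\log x}\right)\left\{1 + O\!\left(\frac{1}{\log y}\right)\right\} \sim \frac{y}{\log x} \longrightarrow 0,$$
since $y/\log x \to 0$. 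Hence $y^{\alpha} = \exp(\alpha \log y) \to 1$, and combining with the error bound completes the argument in the regime where $y$ tends to infinity.

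The case where $y$ remains bounded (say along a subsequence with $y \leq C$) is not directly covered by this computation, because $\log y/y$ need not tend to zero. However, for any fixed $y \geq 2$ one has the classical asymptotic $\Psi(x,y) \sim c_{y}(\log x)^{\pi(y)}$ with $c_{y} > 0$, and therefore
$$\frac{\Psi(x/y,y)}{\Psi(x,y)} \sim \left(1 - \frac{\log y}{\log x}\right)^{\pi(y)} \longrightarrow 1$$
as $x \to \infty$. A standard subsequence argument then glues the bounded and unbounded regimes together to produce the uniform statement. I expect the main (and essentially only) nontrivial step to be verifying that $\alpha \log y \to 0$ via~\eqref{alph'}; this is where the precise hypothesis $y = o(\log x)$ is used, and any weakening of it would fail to kill $y^{\alpha}$.
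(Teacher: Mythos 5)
Your proof is correct and uses the same two ingredients as the paper's: the de la Bretèche--Tenenbaum local estimate~\eqref{d<x} with $d=y$ together with~\eqref{alph'} for the regime where $\log y/y \to 0$, and an Ennola-type asymptotic $\Psi(x,y) \sim c_y(\log x)^{\pi(y)}$ for bounded $y$. The only cosmetic difference is the split point: the paper cuts at $y = (\log_2 x)^2$ (so both regimes come with uniform error terms and no subsequence argument is needed), whereas you split at ``$y\to\infty$ versus $y$ bounded'' and invoke a subsequence glue --- which works, since for $y$ in a bounded range $[2,C]$ there are only finitely many sets of primes $\le y$, so the Ennola asymptotic is automatically uniform there.
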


\begin{proof}
$(i)$: Let $y\geq (\log_2 x)^2$ and $y=o(\log x)$. By applying~\eqref{d<x}, if $d=y$, we obtain
\begin{equation}
\Psi(x/y,y)=\frac{\Psi(x,y)}{ y^\alpha} \left\lbrace 1+O\left(\frac{\log y}{y}\right)\right\rbrace.
\end{equation}
 By combination of the above estimate along with ~\eqref{alph'}, we get
\begin{equation}\label{Psi(x/y,y)}
\Psi(x/y,y) = \frac{\Psi(x,y)}{\left(1+\frac{y}{\log x}\right)^{1+O\left(\frac{1}{\log y}\right)}}\left\lbrace1+O\left( \frac{\log y}{y}\right)\right\rbrace.
\end{equation}
We remark again that $y=o(\log x)$, so we obtain 
$$\frac{1}{\left(1+y/\log x\right)^{1+O\left(1/\log y\right)}} \rightarrow 1\qquad \text{when} \qquad x \rightarrow \infty.$$
Also, we have
$$\frac{\log y}{y}\to 0 \quad \text{when} \quad x\to \infty,$$
since $y\geq (\log_2 x)^2$. Thus, by~\eqref{Psi(x/y,y)}, we conclude 
$$\frac{\Psi(x/y,y)}{\Psi(x,y)}\rightarrow 1 \quad \text{when}\quad x\rightarrow \infty.$$
$(ii):$ Let $2\leq  y \leq (\log_2x)^2$, then by recalling Ennola's theorem \ref{ennola}, we get
\begin{equation}
\begin{split}
\Psi(x/y,y)&= \frac{1}{\pi(y)!} \prod_{p\leq y} \frac{\log x/y}{\log p}\left\lbrace 1+O\left(\frac{y^2}{\log x \log y}\right) \right\rbrace\\
& =\frac{1}{\pi(y)!}\prod_{p\leq y} \frac{\log x}{\log p} \prod_{p\leq y }\left(1-\frac{\log y}{\log x}\right)\left\lbrace 1+O\left(\frac{y^2}{\log x \log y}\right) \right\rbrace\\
& =\Psi(x,y) \left(1+ O\left(\pi(y) \frac{\log y}{\log x}\right)\right)\\
&= \Psi(x,y)\left( 1+O\left(\frac{y}{\log x}\right)\right),
\end{split}
\end{equation}
which gives that
$$\Psi(x/y,y) \sim \Psi(x,y) \quad \text{as} \quad x\to \infty, $$
and this completes the proof.
\end{proof}
\bigskip

Finally, we define
$$
\theta (x,y,z) := \# \{ n\leq x : p|n \Rightarrow z\leq p\leq y \} .
$$
This function has been studied extensively in the literature. Namely   Friedlander~\cite{Fried} and Saias~\cite{II,III} gave several estimates for $\theta(x,y,z)$ in different ranges. The following theorem is due to Saias~\cite[Theorem 5]{III} which is used in Section $4$.
\begin{theorem} 
There exists a constant $c>0$ such that for $x\geq y\geq z\geq 2$ we have

\begin{equation}\label{teta}
\theta(x,y,z) \leq c\frac{\Psi(x,y)}{\log z}.
\end{equation}
\end{theorem}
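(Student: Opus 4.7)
My plan is to exploit the exact decomposition
\begin{equation*}
\Psi(x,y)=\sum_{b\in\theta(x,y,z)}\Psi(x/b,z^{-}),
\end{equation*}
obtained by writing every $y$-smooth integer $N\leq x$ uniquely as $N=ab$, with $a$ the largest divisor of $N$ whose prime factors are all strictly less than $z$, so that $b=N/a\in\theta(x,y,z)$ and $a\leq x/b$; here $z^{-}$ denotes the largest prime strictly less than $z$. The theorem will follow once we establish that $\Psi(x/b,z^{-})\gg \log z$ for enough of the summands.

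For the main range $b\leq x/z$, the estimate is immediate: we have $x/b\geq z$, and every positive integer $\leq z-1$ is trivially $z^{-}$-smooth, hence $\Psi(x/b,z^{-})\geq z-1\gg z\gg\log z$. This already yields the stronger inequality $\theta(x/z,y,z)\ll\Psi(x,y)/z$. Equivalently, one may argue through the injection $(b,p)\mapsto pb\in\Psi(x,y)$, defined on pairs with $p<z$ prime and $pb\leq x$, which is one-to-one because $p$ must coincide with the smallest prime factor of the product $pb$.

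The delicate step, and what I expect to be the main obstacle, is to bound the tail $T:=\#\{b\in\theta(x,y,z):b>x/z\}$ by $\ll\Psi(x,y)/\log z$. I would approach this by refactoring each such $b$ as $b=pn'$ with $p=P(b)\in[z,y]$ prime and $n'=b/p\in\theta(x/z,y,z)$, arriving at $T\leq \sum_{z\leq p\leq y}\bigl(\theta(x/p,y,z)-\theta(x/(zp),y,z)\bigr)$, and then iterating the same scheme (a Buchstab-type recursion in the variable $z$) in combination with Mertens' theorem $\sum_{z\leq p\leq y}1/p\asymp \log_{2}y-\log_{2}z$. When $\Psi(x,y)$ is too small for this elementary bound to close, the cleanest alternative is Rankin's inequality $\theta(x,y,z)\leq x^{\alpha}\prod_{z\leq p\leq y}(1-p^{-\alpha})^{-1}$ at the saddle-point exponent $\alpha=\alpha(x,y)$ from~\eqref{alpha}, compared against the saddle-point expansion of $\Psi(x,y)$: the ratio of the full to the truncated Euler product is $\prod_{p<z}(1-p^{-\alpha})^{-1}\asymp \log z$ provided $\alpha$ is close enough to $1$, and the remaining saddle-point correction factors must be tracked carefully so as not to spoil the uniform constant $c$.
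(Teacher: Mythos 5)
The paper does not prove this theorem: it is quoted directly from Saias (reference~\cite{III}, Theorem~5), so there is no internal argument to compare against, and your attempt has to be judged on its own.

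Your opening identity
\begin{equation*}
\Psi(x,y)=\sum_{\substack{b\le x\\ p\mid b\Rightarrow z\le p\le y}}\Psi(x/b,z^{-})
\end{equation*}
is correct, and so is the deduction $\theta(x/z,y,z)\le\Psi(x,y)/(z-1)\le\Psi(x,y)/\log z$ from the summands with $b\le x/z$. But note that this bounds $\theta(x/z,y,z)$, not $\theta(x,y,z)$; it is not a ``stronger inequality'' for the quantity in the theorem but a bound on a smaller one, and (heuristically $\theta(X,y,z)\approx\Psi(X,y)/\log z$, so the ratio is about $z^{-\alpha}$) for $\alpha$ not too small the piece you have controlled is the \emph{minority} of the count. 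The entire content of the theorem sits in the tail $T=\#\{x/z<b\le x:\ p\mid b\Rightarrow z\le p\le y\}$, where $x/b<z$, your identity only gives $\Psi(x/b,z^-)\ge 1$, and the injection $(p,b)\mapsto pb$ fails because $pb>x$. You correctly flag this as the obstacle, but you do not overcome it.

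Neither fallback is carried through. The Buchstab-style inequality $T\le\sum_{z\le p\le y}\bigl(\theta(x/p,y,z)-\theta(x/(zp),y,z)\bigr)$ is valid, but you give no mechanism by which the iteration terminates or contracts; the Mertens sum $\sum_{z\le p\le y}1/p$ never actually appears, since the summands are full $\theta$-counts and not $1/p$-weights, so it is unclear why you expect it to close. The Rankin route is closer to how such results are actually proved, but your claim $\prod_{p<z}(1-p^{-\alpha})^{-1}\asymp\log z$ requires $\alpha$ bounded away from $0$ (it fails in the regime $y\le(\log x)^{O(1)}$, still allowed by $x\ge y\ge z\ge 2$, where $\alpha\to 0$), and you must also defeat the saddle-point normalization: $\Psi(x,y)\asymp x^{\alpha}\zeta(\alpha,y)/\bigl(\alpha\sqrt{2\pi\phi''(\alpha,y)}\bigr)$, and the factor $\alpha\sqrt{\phi''(\alpha,y)}$ is roughly a power of $\log x$, so the truncated Euler product $\prod_{p<z}(1-p^{-\alpha})$ has to absorb that \emph{in addition} to supplying the $1/\log z$. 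You acknowledge this (``the remaining saddle-point correction factors must be tracked carefully'') but do not do it, uniformly in $2\le z\le y\le x$. As it stands, this is a reasonable plan with the decisive step explicitly left open, not a proof of Saias's bound.
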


\section{Proof of Theorem~\ref{thm1}} We begin this section by setting some notation. Let $\eta$ be defined by
$$\eta := \frac{1}{\log_3 y},$$
and set
\begin{equation}\label{N1}
N:=\left\lfloor\frac{\log_2 y -\log\eta}{\log 2}+2\right\rfloor,
\end{equation}
which play an essential role in process of the proof.\\
The idea of the proof of Theorem \ref{thm1} is a combination of some probabilistic and combinatorial techniques. Before going through the details, we give a sketch of proof here.\\
The first step of proving Theorem~\ref{thm1} is to study the number of all prime factors of $n$ in the {\it narrow intervals} 
$$
J_i:=\left[(1-\kappa)y^{1-\frac{1}{2^i}},y^{1-\frac{1}{2^i}}\right], \qquad 1\leq i\leq N,
$$
of {\it multiplicative length} $(1-\kappa)^{-1}$, where $\kappa$ is defined as
\begin{equation}\label{Kappa}
\kappa := \frac{\eta}{2N}.
\end{equation}
Also, we define the {\it tail interval} 
$$J_{\infty}:= [(1-\kappa)y,y].$$
Let $\omega_i(n)$ be the number of prime factors of $n$ in $J_i$ for each $i\in\{1,2,\dots,N,\infty\}$, more formally
\begin{equation}\label{omega_i}
\omega_i(n):= \# \left\lbrace p|n : p\in J_i\right\rbrace.
\end{equation}

We define $\mu_i(x,y)$ to be the expectation of $\omega_i(n)$, defined by
\begin{equation}\label{muidef}
\mu_i(x,y):=\frac{1}{\Psi(x,y)}\sum_{n\in S(x,y)}\omega_i(n),
\end{equation} 
 In Proposition \ref{p1}, we will prove that for almost all $y-$smooth integers the value of $\omega_i(n)$ exceeds $\mu_i(x,y)/2$. We establish this by applying the Chebyshev's inequality
\begin{equation}\label{Chebyshev}
\frac{\# \{n\in S(x,y): \omega_i(n)\leq \mu_i(x,y)/2\}}{\Psi(x,y)}\leq \frac{4\sigma_i^2(x,y)}{\mu_i^2(x,y)},
\end{equation}
where 
\begin{equation}\label{vardef}
\sigma_i^2(x,y):=\frac{1}{\Psi(x,y)}\sum_{n\in S(x,y)}\left(\omega_i(n)-\mu_i(x,y)\right)^2,
\end{equation}
is the variance of $\omega_i(n)$ and $ i\in \{1,2,\dots,N,\infty\}$. We will conclude that there is at least one prime factor $p_i$ in each $J_i$ for $1\leq i\leq N$ and $N$ prime factors $q_1,\dots, q_N$ in $J_\infty$. Then by using the product of these prime factors in Corollary \ref{cor1}, we will find a divisor $D_j$ of $n$ such that 
$$(1-\kappa)^N y^{N-j/2^N} \leq D_j \leq y^{N-j/2^N}, $$ 
for an integer $j$ in $\{0,1,\dots 2^N-1 \}$.\\
Then, we fix an integer $n$ in $S((1-\eta)x,y)$, and by defining $m:= \frac{n}{\prod_{i=1}^Np_i q_i}$, we will easily show that there is a divisor $d_j$ of $n$, such that
$$\frac{\sqrt{n}}{y^N}y^ {j/2^N} <d_j< \frac{\sqrt{n}}{y^N} y^{(j+1)/2^N}.$$
Multiplying $D_j$ and $d_j$ and using the definitions of $\eta$, $\kappa$ and $N$, gives a new divisor $d$ of $n$ that helps us to write $n$ as the product of two divisors less than $\sqrt{x}$.\\\\

Before stating technical lemmas we get an estimate for the expected value of $\omega_i(n)$ for all $1\leq i\leq N$ and $i=\infty$. By changing the order of summation in \eqref{muidef}, we can easily see that 
\begin{equation}\label{Expectation}
\mu_i(x,y)=\sum_{p\in J_i} \frac{\Psi(x/p,y)}{\Psi(x,y)}.
\end{equation}
By \eqref{d<x}, we have the following estimate 

\begin{equation}\label{35}
 \begin{split}
 \mu_i(x,y)&=\sum_{p\in  J_i} \frac{1}{p^{\alpha}}\left(1+O\left(\frac{1}{u}+ \frac{\log y}{y}\right)\right),
 \end{split}
 \end{equation}
 for all $1\leq i \leq N$ and $x\geq y\geq 2$.
 Also, we obtain the following estimate for $\mu_i(x/q,y)$, where $q$ is a prime divisor of $n$.
\begin{equation}\label{mu_i-estimate}
\mu_i(x/q,y)=\sum_{p\in J_i}\frac{1}{p^{\alpha_q}}\left\{1+O\left(\frac{1}{u_q}+ \frac{\log y}{y}\right)\right\},
\end{equation}
where $u_q:= u-\log q/\log y$. By substitution we obtain $x/q =y^{u_q}$. Set the saddle point $\alpha_q:=\alpha (x/q,y)$, defined as the unique real number satisfying in 
\begin{equation}\label{alphaq}
\sum_{p\leq y} \frac{\log p}{p^{\alpha_q}-1}=\log(x/q).
\end{equation}\\\\

We are ready to prove the following lemma that shows the difference between $\mu_i(x/q,y)$ and $\mu_i(x,y)$ is small.  
\begin{lemma}\label{main-lemma} Let $q$ be a prime divisor of $n\in S(x,y)$, then we have
$$\big|\mu_i(x/q,y)-\mu_i(x,y)\big|\ll \frac{\mu_i(x,y)}{u}. $$
\end{lemma}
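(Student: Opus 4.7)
The plan is to compare $\mu_i(x,y)$ and $\mu_i(x/q,y)$ through their almost-explicit expressions \eqref{35} and \eqref{mu_i-estimate} as sums $\sum_{p\in J_i} p^{-\beta}$ with $\beta=\alpha$ and $\beta=\alpha_q$ respectively, and then to show that these two sums differ by at most $O(\mu_i(x,y)/u)$. The single key input is that the saddle point barely moves when $x$ is replaced by $x/q$ for a prime $q\leq y$; everything else is Taylor expansion and bookkeeping.

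First I would bound $\alpha_q-\alpha$ by subtracting the defining equations \eqref{alpha} and \eqref{alphaq} to obtain
$$\sum_{p\leq y}\log p\left(\frac{1}{p^\alpha-1}-\frac{1}{p^{\alpha_q}-1}\right)=\log q,$$
and then applying the mean value theorem to deduce $\alpha_q-\alpha=\log q/\Lambda(\alpha^\ast)$ for some $\alpha^\ast\in(\alpha,\alpha_q)$, where
$$\Lambda(\beta):=\sum_{p\leq y}\frac{(\log p)^2\,p^\beta}{(p^\beta-1)^2}$$
is the standard second-derivative quantity in the saddle-point theory for $\Psi(x,y)$. A partial summation argument, using $\pi(t)\sim t/\log t$ and the asymptotic \eqref{alph} for $\alpha$, gives the known sharp estimate $\Lambda(\alpha)\asymp u(\log y)^2$ in the ranges pertinent to Theorem~\ref{thm1}. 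Since $q$ is a prime factor of a $y$-smooth integer we have $\log q\leq \log y$, whence the crucial bound
$$|\alpha_q-\alpha|\ll\frac{1}{u\log y}.$$

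Next, since $|\alpha_q-\alpha|\log p\leq |\alpha_q-\alpha|\log y\ll 1/u$ for every $p\leq y$, a first-order Taylor expansion of the exponential yields
$$\bigl|p^{-\alpha_q}-p^{-\alpha}\bigr|=p^{-\alpha}\bigl|e^{-(\alpha_q-\alpha)\log p}-1\bigr|\ll\frac{p^{-\alpha}}{u}.$$
Summing over $p\in J_i$, using \eqref{35} to identify $\sum_{p\in J_i}p^{-\alpha}$ with $\mu_i(x,y)$ up to a $1+O(1/u+\log y/y)$ factor, and similarly using \eqref{mu_i-estimate} together with the trivial inequality $u_q\geq u-1$ (so the error term $1/u_q$ is absorbed into $1/u$), one obtains $\bigl|\mu_i(x/q,y)-\mu_i(x,y)\bigr|\ll\mu_i(x,y)/u$.

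The main obstacle will be justifying the lower bound $\Lambda(\alpha^\ast)\gg u(\log y)^2$ uniformly throughout the range of Theorem~\ref{thm1}. The estimate for $\Lambda(\alpha)$ is classical and follows from~\cite{TenDe}, but one must ensure that it continues to hold at the slightly perturbed value $\alpha^\ast$ produced by the mean value theorem; since $|\alpha_q-\alpha|\ll 1/(u\log y)$ is much smaller than $\alpha$ itself, continuity of $\Lambda$ in $\beta$ should make this automatic. A secondary technical point is that the error terms $O(1/u+\log y/y)$ in \eqref{35} and $O(1/u_q+\log y/y)$ in \eqref{mu_i-estimate} must be shown to contribute only $O(\mu_i(x,y)/u)$, which is immediate from $u_q\geq u-1$ and the fact that $\log y/y\ll 1/u$ in the relevant range.
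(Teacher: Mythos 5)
Your proposal is correct and takes essentially the same route as the paper: both bound $|\alpha-\alpha_q|\ll 1/(u\log y)$, Taylor-expand $p^{\alpha-\alpha_q}-1$ to get a contribution $\ll 1/u$, and then sum over $p\in J_i$ using \eqref{d<x} and the fact that $u_q\asymp u$. The one point of divergence is how the saddle-point increment is controlled: the paper cites the derivative estimate $|\alpha'(u)|\asymp \bar u/(u^2\log y)\le 1/(u\log y)$ from Hildebrand--Tenenbaum and integrates it between $u_q$ and $u$, whereas you apply the mean value theorem directly to the defining equation and invoke $\Lambda(\alpha)\gg u(\log y)^2$. These are equivalent, since differentiating \eqref{alpha} in $u$ gives $-\alpha'(u)=\log y/\Lambda(\alpha)$; the paper's route bundles the needed lower bound on $\Lambda$ into the cited estimate, while yours makes it explicit, so your ``main obstacle'' (controlling $\Lambda$ at the intermediate point $\alpha^\ast$) is precisely the technical content already packaged in the reference the paper uses.
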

\begin{proof} 
We use the estimate
\begin{equation}\label{alphahil}
0<-\alpha'(u):= -\frac{d\alpha(u)}{du}\asymp \frac{\bar{u}}{u^2 \log y},
\end{equation}
established in ~\cite[formula 6.6]{Hil&Ten}, where $\bar{u}:= \min \{u, \frac{y}{\log y}\}$.  By \eqref{alphahil}, we deduce \\
\begin{equation}\label{alphahil2}
\big|\alpha^{'}(u)\big| \ll \frac{1}{u\log y}.
\end{equation}
Then applying \eqref{alphahil2}, gives that
\begin{equation}\label{diff alpha}
\begin{split}
\alpha -\alpha_q \leq \int_{u_q}^{u} \big|\alpha^{'}(v)\big| dv &\ll \int_{u}^{u_q} \frac{dv}{v\log y}\\
&= \frac{1}{\log y} \log \left(\frac{u}{u_q}\right) \asymp\frac{\log q}{\log y \log x}.
\end{split}
\end{equation}
By expanding $\mu_i(x/q,y)-\mu_i(x,y)$ and using~\eqref{Expectation} and~\eqref{mu_i-estimate}, we get
\begin{equation}\label{negative}
\begin{split}
\big|\mu_i(x/q,y)-\mu_i(x,y)\big| &= \big|\sum_{p\in J_i} \left(\frac{\Psi(x/pq,y)}{\Psi(x/q,y)}- \frac{\Psi(x/p,y)}{\Psi(x,y)}\right)\big| \\
&\leq \sum_{p\in J_i} \frac{1}{p^\alpha}\left\lbrace\big| p^{\alpha -\alpha_q}-1 \big|+O\left(\frac{1}{u}+ \frac{\log y}{y}\right)\right\rbrace.
\end{split}
\end{equation}
By the Taylor expansion of the exponential function and invoking~\eqref{diff alpha} we obtain
\begin{equation}
\exp\{(\alpha-\alpha_q)\log p\}-1 \ll \frac{\log p \log q}{\log y \log x}.
\end{equation}
We recall that $p,q \leq y$ for $1\leq i\leq N$ and $i=\infty$. From this we infer that 
$$\big| p^{\alpha-\alpha_q}-1\big|\ll \frac{1}{u},$$
this finishes the proof. 
\end{proof}
In the following lemma we shall find an upper bound for $\sigma_i^2(x,y)$ (defined in \eqref{vardef}) for each $i\in \{1,2,\dots, N,\infty\}$.
\begin{lemma}\label{variance}
We have 
$$\sigma_i^2 (x,y) \ll \mu_i(x,y)+ \mu_i^2(x,y)/u,$$
where $i\in \{1,2,\dots ,N,\infty\}.$
\end{lemma}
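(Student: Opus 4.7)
The plan is to compute the second moment $\mathbb{E}[\omega_i^2]:=\Psi(x,y)^{-1}\sum_{n\in S(x,y)}\omega_i(n)^2$ and then exploit the identity $\sigma_i^2(x,y)=\mathbb{E}[\omega_i^2]-\mu_i(x,y)^2$. Expanding $\omega_i(n)^2 = \sum_{p,p'\in J_i,\,p|n,\,p'|n}1$ and swapping the order of summation yields
$$\mathbb{E}[\omega_i^2] \;=\; \sum_{p\in J_i}\frac{\Psi(x/p,y)}{\Psi(x,y)} \;+\; \sum_{\substack{p,p'\in J_i\\ p\neq p'}}\frac{\Psi(x/(pp'),y)}{\Psi(x,y)}.$$
By \eqref{Expectation} the diagonal sum is exactly $\mu_i(x,y)$, so the whole task reduces to controlling the off-diagonal pairs.

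The subtlety is that $pp'$ may exceed $y$, so Theorem \ref{locald<y} cannot be applied to $\Psi(x/(pp'),y)$ in a single step. I would circumvent this by the factorization
$$\frac{\Psi(x/(pp'),y)}{\Psi(x,y)} \;=\; \frac{\Psi((x/p)/p',y)}{\Psi(x/p,y)}\cdot\frac{\Psi(x/p,y)}{\Psi(x,y)},$$
and apply \eqref{d<x} twice, once with base $x$ and divisor $p\leq y$, and once with base $x/p$ and divisor $p'\leq y$. This produces $1/(p^{\alpha}(p')^{\alpha_p})$ up to an error factor $1+O(1/u+\log y/y)$, where $\alpha_p=\alpha(x/p,y)$ and I have used $u_p=u-\log p/\log y\asymp u$ since $p\leq y$. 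The bound $|\alpha-\alpha_p|\ll \log p/(\log y\log x)$ from \eqref{diff alpha} together with a Taylor expansion of the exponential gives $(p')^{\alpha_p} = (p')^{\alpha}(1+O(1/u))$, so the saddle point $\alpha_p$ may be replaced by $\alpha$ without damaging the error term.

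Summing over the off-diagonal pairs, the contribution becomes
$$\left(1+O\!\left(\tfrac{1}{u}+\tfrac{\log y}{y}\right)\right)\left[\Bigl(\sum_{p\in J_i}\tfrac{1}{p^{\alpha}}\Bigr)^{2} - \sum_{p\in J_i}\tfrac{1}{p^{2\alpha}}\right].$$
By \eqref{35} the squared sum equals $\mu_i(x,y)^{2}\bigl(1+O(1/u+\log y/y)\bigr)$, while the diagonal correction is bounded by $\sum_{p\in J_i}p^{-\alpha}\ll\mu_i(x,y)$ since $\alpha\geq 0$ forces $p^{\alpha}\geq 1$. Assembling the pieces,
$$\mathbb{E}[\omega_i^2]\;=\;\mu_i(x,y)+\mu_i(x,y)^{2}+O\!\left(\mu_i(x,y)+\mu_i(x,y)^{2}/u\right),$$
and subtracting $\mu_i(x,y)^{2}$ completes the proof. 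The case $i=\infty$ is identical because $J_\infty\subset[1,y]$. The main obstacle, and really the only nontrivial point, is the two-step application of \eqref{d<x}: keeping track of the two distinct saddle points $\alpha$ and $\alpha_p$ requires exactly the derivative-type bound supplied by \eqref{diff alpha}, which is already in the toolbox from the proof of Lemma \ref{main-lemma}.
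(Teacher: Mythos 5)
Your proof is correct and follows essentially the same route as the paper: expand the second moment into diagonal and off-diagonal contributions, then control $\Psi(x/pq,y)/\Psi(x,y)$ via two applications of Theorem~\ref{locald<y} together with the saddle-point shift bound \eqref{diff alpha}. The paper packages the saddle-point bookkeeping into Lemma~\ref{main-lemma} and sidesteps the diagonal via the one-sided inequality $\sum_{p\neq q}\Psi(x/pq,y)-\Psi(x,y)\mu_i^2\leq\sum_{p}\Psi(x/p,y)\bigl(\mu_i(x/p,y)-\mu_i(x,y)\bigr)$, whereas you compute $\mathbb{E}[\omega_i^2]$ directly and subtract $\sum_{p}p^{-2\alpha}\ll\mu_i$ by hand, but the underlying estimates are identical.
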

\begin{proof}
By the definition of $\sigma_i^2 (x,y)$ in \eqref{vardef}, we have
\begin{equation*}
\sigma_i^2(x,y) =\frac{1}{\Psi(x,y)}\sum_{n\in S(x,y)}\left[ \omega_i^2(n)-2\mu_i(x,y)\omega_i(n)+\mu_i^2(x,y)\right].
\end{equation*}
Using the definition of $\omega_i(n)$ in \eqref{omega_i}, gives
\begin{equation*}
\sum_{n\in S(x,y)} \omega_i(n)=\sum_{n\in S(x,y)} \sum_{ p\in J_i} \mathbb{1}_{p|n} = \sum_{p\in J_i}\Psi(x/p,y),
\end{equation*}
where the indicator function $\mathbb{1}_{p|n}$ is $1$ or $0$ according to the prime $p$ divides $n$ or not. By the definition of $\mu_i(x,y)$ in ~\eqref{Expectation}, one can deduce that
$$\sum_{n\in S(x,y)}\omega_i(n)= \Psi(x,y) \mu_i(x,y).$$
 By applying \eqref{Expectation} and the equation above, we obtain

\begin{equation*}
\begin{split}
\Psi(x,y)\sigma_i^2(x,y) &=\sum_{n\in S(x,y)}\left[ \omega_i^2(n)-2\mu_i(x,y)\omega_i(n)+\mu_i^2(x,y)\right] \\
&=\sum_{n\in S(x,y)} \omega_i^2(n)-2\Psi(x,y)\mu_i^2(x,y)+\psi(x,y)\mu_i^2(x,y)\\
& =\left(\sum_{\substack{p,q \in J_j\\ p\not= q}} \Psi(x/pq,y)\right)- \Psi(x,y)\mu_i^2(x,y)
+\sum _{p\in J_i}\Psi(x/p,y)\\
&:=S_1 +S_2,
\end{split}
\end{equation*}
where $S_1:=\sum_{\substack{p,q \in J_j\\ p\not= q}} \Psi(x/pq,y)- \Psi(x,y)\mu_i^2(x,y) $ and $S_2:= \sum _{p\in J_i}\Psi(x/p,y)$.
 We next find an upper bound for each $S_i$. We first consider $S_1$, by using ~\eqref{Expectation} we can get
\begin{equation}\label{S1}
\sum_{\substack{p,q \in J_i\\ p\not= q}} \Psi(x/pq,y)-
\Psi(x,y)\mu_i^2(x,y) \leq \sum_{p\in J_i}\Psi(x/p,y)\left(\mu_i(x/p,y)-\mu_i(x,y)\right).
\end{equation} 
By Lemma~\ref{main-lemma} and using~\eqref{S1}, we obtain the following upper bound for $S_1$
\begin{equation}\label{upper-S_1}
S_1\leq C \frac{\Psi(x,y)\mu_i^2(x,y)}{u},
\end{equation}
where $C$ is a positive constant. It remains to estimate $S_2$, from \eqref{Expectation} we have
$$S_2= \Psi(x,y)\mu_i(x,y).$$
By substituting the upper bounds for $S_1$ and $S_2$,  we get
$$\sigma_i^2(x,y) = \frac{S_1+S_2}{\Psi(x,y)} \ll\left( \mu_i(x,y)+\frac{\mu_i^2(x,y)}{u}\right),$$
and the proof is complete.
\end{proof}

Now we give an order of magnitude for $\mu_i(x,y)$, where $i\in \{1,2,\dots, N, \infty\}$
\begin{lemma}\label{Mumagnitude}
We have
$$\mu_i(x,y) \asymp \kappa \frac{Y^{1-\frac{1}{2^i}}}{\log y},$$
where  $i\in \{1,2,\dots, N,\infty\}$, and $$Y:= y^{1-\alpha}.$$
\end{lemma}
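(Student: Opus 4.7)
The plan is to reduce $\mu_i(x,y)$ to a short-interval prime sum and estimate it using the prime number theorem. From \eqref{35}, assuming $y$ is large enough that $1/u + \log y / y \le 1/2$, we have
\[
\mu_i(x,y) \asymp \sum_{p \in J_i} p^{-\alpha}.
\]
Write $z_i := y^{1-1/2^i}$ for $1 \le i \le N$ and $z_\infty := y$, so that $J_i = [(1-\kappa)z_i,\,z_i]$.

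On each interval $J_i$ the weight $p^{-\alpha}$ is essentially constant. Indeed, for $p \in J_i$ we have
\[
z_i^{-\alpha} \le p^{-\alpha} \le (1-\kappa)^{-\alpha} z_i^{-\alpha} = (1+O(\kappa))\,z_i^{-\alpha},
\]
using $0 < \alpha \le 1$ and $\kappa = o(1)$. Consequently
\[
\sum_{p \in J_i} p^{-\alpha} \asymp \frac{1}{z_i^{\alpha}}\bigl(\pi(z_i) - \pi((1-\kappa)z_i)\bigr),
\]
and the whole problem reduces to counting primes in the interval $((1-\kappa)z_i,\,z_i]$.

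The main step is then this prime count. By \eqref{Kappa} we have $\kappa \asymp 1/(\log_2 y \cdot \log_3 y)$, while $z_i \ge y^{1/2}$ for every $i \in \{1,\dots,N,\infty\}$, so the interval length satisfies
\[
\kappa z_i \;\gg\; \frac{y^{1/2}}{\log_2 y\,\log_3 y},
\]
which dominates the PNT remainder $z_i \exp(-c\sqrt{\log z_i})$ by a wide margin. The classical prime number theorem therefore yields
\[
\pi(z_i) - \pi((1-\kappa)z_i) \;\asymp\; \frac{\kappa z_i}{\log z_i} \;\asymp\; \frac{\kappa z_i}{\log y},
\]
where the last equivalence uses $\log z_i = (1 - 1/2^i)\log y \asymp \log y$ (valid uniformly for $i \ge 1$ and for $i = \infty$, since $1 - 1/2^i \ge 1/2$).

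Putting the pieces together gives
\[
\mu_i(x,y) \;\asymp\; \frac{1}{z_i^{\alpha}} \cdot \frac{\kappa z_i}{\log y} \;=\; \frac{\kappa\,z_i^{1-\alpha}}{\log y} \;=\; \kappa \,\frac{Y^{1-1/2^i}}{\log y},
\]
which is the claim, with the convention $1 - 1/2^\infty = 1$ handling the tail interval. I do not anticipate any serious obstacle: the only point requiring care is justifying the lower bound on the short-interval prime count, but $\kappa z_i$ is so much longer than the PNT threshold that this is immediate, and the uniform bound $\log z_i \asymp \log y$ is forced by the range $i \ge 1$.
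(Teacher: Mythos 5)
Your proof is correct and follows essentially the same route as the paper: starting from \eqref{35}, you reduce $\mu_i(x,y)$ to $\sum_{p\in J_i}p^{-\alpha}$, pull out the essentially constant weight $z_i^{-\alpha}$ on the short interval, and count primes in $J_i$ by the prime number theorem using $\kappa\asymp 1/(\log_2 y\,\log_3 y)$ to see that the main term dominates the error. The only cosmetic difference is that you invoke the strong PNT remainder $O(t\exp(-c\sqrt{\log t}))$, whereas the paper gets by with the weaker $\pi(t)=t/\log t + O(t/\log^2 t)$, which still suffices since $\kappa\gg 1/\log y$.
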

\begin{proof}
By the definition of each $J_i$, we obtain the following simple inequalities
 \begin{equation} \label{low}
\begin{split}
\frac{1}{y^{\alpha(1-1/2^i)}} \# \left\lbrace p\in J_i\right\rbrace\leq\sum_{p\in J_i} \frac{1}{p^{\alpha}} \leq \frac{1}{(1-\kappa)y^{\alpha(1-1/2^i)}} \# \left\lbrace p\in J_i\right\rbrace.
\end{split}
\end{equation} 
 By applying the prime number theorem, we obtain
\begin{equation}\label{pnt}
\begin{split}
\#\{p: p \in J_i\}&=\pi(y^{1-1/2^i})-\pi((1-\kappa)y^{1-1/2^i})\\&=\frac{y^{1-1/2^i}}{\log\left(y^{1-1/2^i}\right)}-\frac{(1-\kappa)y^{1-1/2^i}}{\log\left((1-\kappa)y^{1-1/2^i}\right)}+O\left(\frac{y^{1-1/2^i}}{\log^2 y}\right) \\
&= \frac{y^{1-1/2^i}}{(1-1/2^i) \log y}-\frac{(1-\kappa)y^{1-1/2^i}}{(1-1/2^i)\log y}\left(1+O\left( \frac{\log (1-\kappa)}{\log y}\right)\right)\\
&= \frac{\kappa y^{1-1/2^i}}{(1-1/2^i)\log y}(1+o(1)),
\end{split}
\end{equation}
The last equality is true, since the given values of $\kappa$ and $N$ in \eqref{Kappa} and \eqref{N1} imply
\begin{equation}\label{Kappa-lower}
\kappa \asymp 1/(\log_2 y \log_3 y).
\end{equation}
By substituting \eqref{pnt} in \eqref{low} we have
\begin{equation}\label{Mu1}
\mu_i(x,y)\asymp\kappa\frac{ Y^{1-1/2^i}}{\log y},
\end{equation}
\end{proof}
\bigskip
By the above lemmas, we are now ready for proving the following proposition.
\begin{proposition}\label{p1} If $u$ and $y$ satisfy in range given in\eqref{range1}, we have
$$\#\left\{n\in S(x,y): \omega_i(n)> \frac{\mu_i(x,y)}{2}\quad \forall i\in\{1,\dots,N,\infty\}\right\}\sim \Psi(x,y) \quad \text{as} \quad x,y\to \infty,$$
\end {proposition}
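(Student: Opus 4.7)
The plan is to control the complement of the event via Chebyshev's inequality \eqref{Chebyshev} applied separately for each index $i \in \{1, 2, \dots, N, \infty\}$, and then to combine these bounds through a union bound. Setting $B_i := \{n \in S(x,y) : \omega_i(n) \leq \mu_i(x,y)/2\}$, one has
$$
\#\{n\in S(x,y):\exists\, i,\ \omega_i(n)\leq \mu_i(x,y)/2\}\leq \sum_i |B_i|\leq 4\Psi(x,y)\sum_i \frac{\sigma_i^2(x,y)}{\mu_i^2(x,y)},
$$
so it suffices to show that the last sum is $o(1)$ in the range \eqref{range1}.

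By Lemma \ref{variance}, $\sigma_i^2/\mu_i^2\ll 1/\mu_i+1/u$, hence
$$
\sum_i \frac{\sigma_i^2}{\mu_i^2}\ll \sum_i \frac{1}{\mu_i}+\frac{N+1}{u}.
$$
I would then insert the asymptotic $\mu_i\asymp \kappa\, Y^{1-1/2^i}/\log y$ from Lemma \ref{Mumagnitude}, where $Y=y^{1-\alpha}$. Since the exponents $1-1/2^i$ strictly increase to $1$, the dominant contribution to $\sum_{i=1}^{N}1/\mu_i$ comes from $i=1$ and the remaining terms form a rapidly convergent geometric-type series; the tail contribution $1/\mu_\infty\asymp \log y/(\kappa Y)$ is even smaller. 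Consequently
$$
\sum_i \frac{1}{\mu_i}\ll \frac{\log y}{\kappa\, Y^{1/2}}.
$$

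It remains to substitute the explicit parameter sizes. Combining \eqref{alph} and \eqref{XI} with the standard expansion $\xi(u)= \log(u\log u)+O(\log_2 u/\log u)$ gives $Y=y^{1-\alpha}\asymp e^{\xi(u)}\asymp u\log u$; by \eqref{Kappa-lower}, $\kappa\asymp 1/(\log_2 y\,\log_3 y)$; and by \eqref{N1}, $N\asymp \log_2 y$. Plugging these in,
$$
\sum_i \frac{1}{\mu_i}\ll \frac{\log y\,\log_2 y\,\log_3 y}{(u\log u)^{1/2}},\qquad \frac{N+1}{u}\ll \frac{\log_2 y}{u}.
$$
The hypothesis \eqref{range1}, namely $u\log u/(\log y\,\log_2 y\,\log_3 y)^{2}\to\infty$, is exactly the statement that the first quantity is $o(1)$; the same hypothesis forces $u$ to grow much faster than $\log_2 y$, so the second is also $o(1)$. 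Summing and applying the Chebyshev bound yields the proposition.

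The principal technical obstacle is ensuring that the calibration of $\kappa$ and $N$ in \eqref{Kappa} and \eqref{N1} meshes with the available variance estimate: $\kappa$ must be small enough to make each $J_i$ narrow (as will be needed later when the divisor $D_j$ is built from products of primes in the $J_i$), yet large enough that the means $\mu_i$ grow quickly enough to swamp both the constant $1$ from Lemma \ref{variance} and the union-bound factor $N+1$. The telescoping exponents $1-1/2^i$ are exactly what allow both requirements to be met at once, and the precise form of the range \eqref{range1} is dictated by the bound on $\sum_i 1/\mu_i$ obtained above.
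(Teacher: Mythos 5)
Your overall strategy—Chebyshev plus a union bound over $i$, then inserting the variance bound from Lemma~\ref{variance} and the magnitude of $\mu_i$ from Lemma~\ref{Mumagnitude}—is exactly the paper's. The estimate $\sum_i 1/\mu_i \ll \log y/(\kappa Y^{1/2})$, with the $i=1$ term dominating a geometrically decaying sum, and the observation that \eqref{range1} also forces $\log_2 y/u\to 0$, are both sound and match the paper's computation.

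The gap is in the single step where you assert $Y = y^{1-\alpha} \asymp u\log u$ unconditionally. That asymptotic rests on \eqref{alph} and the expansion of $\xi$, which are only valid for $y\geq (\log x)^{1+\epsilon}$. The range \eqref{range1} imposes only an upper bound on $y$; it is perfectly consistent with $y$ tending to infinity slowly, e.g.\ $y\asymp (\log x)^{1/2}$, in which case $\alpha$ is governed by \eqref{alph'}, one has $Y\leq y$, and $u\log u$ is far larger than $y$, so $Y\asymp u\log u$ fails—your bound $\sum_i 1/\mu_i\ll \log y\,\log_2 y\,\log_3 y/(u\log u)^{1/2}$ then underestimates the true sum and is not justified. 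The paper handles this by splitting into two regimes: for $y\leq (\log x)^2$ it uses \eqref{alph'} to get $\alpha\leq 1/2+o(1)$, hence $Y\geq y^{1/3}$, and concludes $M\ll \Psi(x,y)\,\log y\,\log_2 y\,\log_3 y/y^{1/6}=o(\Psi(x,y))$; for $y\geq (\log x)^2$ it proceeds as you do. You need to add the small-$y$ case (or otherwise justify a uniform lower bound on $Y$ that makes $\log y\,\log_2 y\,\log_3 y/Y^{1/2}\to 0$) to close the argument.
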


\begin{proof} By the Chebyshev's inequality in \eqref{Chebyshev} and using the upper bound for $\sigma_i^2(x,y)$ in lemma \eqref{variance}, we get
 $$\#\left\{n\in S(x,y): \omega_i(n)\le \frac{\mu_i(x,y)}{2}\right\} \ll \Psi(x,y)\left(\frac{1}{\mu_i(x,y)}+ \frac{1}{u}\right).$$
 By the above inequality, we obtain an upper bound for the following set
 \begin{equation} \label{Mi1}
 \begin{split}
M &:= \#\left\{n\in S(x,y): \exists i \in \{1,\dots,N,\infty\}\quad \text{such that} \,\,\, \omega_i(n)\le\frac{\mu_i(x,y)}{2}\right\} \\&\ll \Psi(x,y)\left[\frac{1}{\mu_\infty(x,y)}+\frac{N}{u}+\sum_{i=1}^N\frac{1}{\mu_i(x,y)}\right].\\
 \end{split}
 \end{equation}
Our main task that finishes the proof is to find a range such that $M/\Psi(x,y)$ tends to $0$.\\
By using Lemma \ref{Mumagnitude} and substituting the order of magnitude of $\mu_i(x,y)$ in~\eqref{Mi1}, we get
\begin{equation}\label{Mlow}
M\ll \Psi(x,y) \left[\frac{\log y}{ \kappa Y}+ \frac{N}{u}+ \frac{\log y}{\kappa}\sum_{i=1}^{N} \frac{1}{Y^{1-1/2^i}} \right].
\end{equation}
In what follows, we find a lower bound for $Y$ in two different ranges of $y$ \\ \\
$\mathbf{ (i)}:$ If $y\leq (\log x)^2$, then by \eqref{alph'} $\alpha \leq 1/2+ o(1)$ as $y\to \infty$. Therefore,

$$Y\geq y^{1/2-o(1) } \geq y^{1/3}.$$
By substituting this lower bound in \eqref{Mlow} and using the precise value of $N$ in \eqref{N1}, we have

\begin{equation}
\begin{split}
M& \ll \Psi(x,y) \left[\frac{\log y}{\kappa y^{1/3}}+ \frac{N}{u}+\frac{\log y}{\kappa y^{1/3}} \sum_{i=1}^{N}y^{1/3(2^{i})}\right]\\
& \ll \Psi(x,y)\left[\frac{\log_2 y}{u}+\frac{ y^{1/6}\log y}{\kappa y^{1/3}} \left(1+ O\left(Ny^{-1/12}\right)\right)\right]\\
& \ll \Psi(x,y)\frac{\log y}{\kappa y^{1/6}},
\end{split}
\end{equation}
By using the asymptotic value of $\kappa$ in \eqref{Kappa-lower}, we obtain
$$M \ll \Psi(x,y) \frac{\log y \log_2 y \log_3 y}{y^{1/6}},$$
and clearly we have
 $$M=o(\Psi(x,y)) \quad \text{as} \quad x,y\to \infty,$$
this finishes the proof for the case $y \leq (\log x)^2$.\\\\
$\mathbf{(ii)}:$ If $y\geq (\log x)^2$, by applying~\eqref{alph}, we have
\begin{equation}\label{alpha aproxx}
1-\alpha = \frac{\xi(u)}{\log y} +O\left(\frac{1}{L_\epsilon (y)}+ \frac{1}{u(\log y)^2}\right).
\end{equation}
Using ~\cite [Lemma 8.1]{Tenenbaum}, we have the following estimate of $\xi$

$$\xi(t) =\log (t\log t)+ O\left(\frac{\log_2 t}{\log t}\right) \quad \text{if} \quad t>3.$$
Therefore,
$$1-\alpha = \frac{\log (u\log u)}{\log y}+ O\left(\frac{\log_2 u}{\log y \log u}\right),$$
Thus, we get
\begin{equation}\label{Ybig}
\begin{split}
Y &= u \log u \left[1+O\left(\frac{\log_2 u}{\log u}\right)\right]\\
&\asymp u \log u.
\end{split}
\end{equation}
By combining the above with the estimate in \eqref{Ybig}, and using the value of $N$ in \eqref{N1}, we get 

\begin{equation}
\begin{split}
M &\ll  \Psi(x,y) \left[ \frac{\log y}{\kappa u \log u}+ \frac{N}{u}+ \frac{\log y}{\kappa u \log u} \sum_{i=1}^{N} (u \log u)^{1/2^i}\right]\\
& \ll \Psi(x,y) \left[ \frac{N}{u}+ \frac{\log y}{\kappa u \log u}\left((u\log u)^{1/2} + (u\log u )^{1/2^2}+...+(u\log u)^{1/2^N}\right)\right]\\
&\ll \Psi(x,y)\left[\frac{N}{u}+ \frac{\log y}{\kappa(u\log u)^{1/2}}\left(1+O\left( N(u\log u)^{-1/4}\right)\right)\right]\\
&\ll\Psi(x,y) \left[ \frac{\log_2 y}{u}+ \frac{\log y}{\kappa(u\log u)^{1/2}}\right],
\end{split}
\end{equation}
By using the order of $\kappa$ in \eqref{Kappa-lower}, one can arrive at the following upper bound of $M$
\begin{equation}
M\ll \Psi(x,y)\frac{ \log y \log_2 y \log_3 y}{(u \log u)^{1/2}}.
\end{equation}
So there exists a constant $c$ such that for all $i\in \{1,\dots, N, \infty\}$, we have
\begin{equation}
\begin{split}
\# \left\lbrace n\in S(x,y) : \omega_i(n)> \mu_i (x,y)/2 \quad \forall i \right\rbrace \geq \Psi(x,y)\left(1-c\frac{\log y \log_2 y \log_3 y}{ (u \log u)^{1/2}}\right),
\end{split}
\end{equation} 
and this finishes the proof by letting $$\frac{u\log u}{(\log y \log_2 y \log_3 y)^2 } \to \infty.$$
 \end{proof}
\bigskip
\begin{corollary}\label{cor1}
If $x$ and $y$ satisfy the range \eqref{range1}, then almost all $n$ in $S(x,y)$ are divisible by at least one prime factor $p_i$ in $J_i$, and $N$ prime factors $q_1,..., q_N$ in $J_\infty$. Moreover, the product $\prod_{i=1}^{N}p_i q_i$ has a divisor $D_j$ in each of intervals $[(1-\kappa)^N y^{N-j/2^N},y^{N-j/2^N}]$, where $j\in \{0,1,...,2^N-1\}$.
 \end{corollary}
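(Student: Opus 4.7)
The plan is to derive Corollary~\ref{cor1} as essentially a book-keeping consequence of Proposition~\ref{p1} combined with a binary-expansion construction of divisors. First I apply Proposition~\ref{p1}: for all $n$ in a subset of $S(x,y)$ of density $1-o(1)$, we have $\omega_i(n)>\mu_i(x,y)/2$ simultaneously for each $i\in\{1,\dots,N,\infty\}$. Since $\mu_i(x,y)>0$ trivially and $\omega_i(n)$ is a non-negative integer, the condition $\omega_i(n)>\mu_i(x,y)/2\ge 0$ already forces $\omega_i(n)\ge 1$ for $1\le i\le N$, producing at least one prime $p_i\in J_i$. For the tail, I must additionally verify $\mu_\infty(x,y)/2\ge N$ in the range \eqref{range1}, so that $\omega_\infty(n)\ge N$ and we can extract $N$ distinct primes $q_1,\dots,q_N\in J_\infty$. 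By Lemma~\ref{Mumagnitude} one has $\mu_\infty(x,y)\asymp \kappa Y/\log y$, and using $\kappa\asymp 1/(\log_2 y\,\log_3 y)$ from \eqref{Kappa-lower} together with the lower bounds for $Y=y^{1-\alpha}$ proved in the two cases of Proposition~\ref{p1} (namely $Y\ge y^{1/3}$ when $y\le (\log x)^2$ and $Y\asymp u\log u$ otherwise), one checks in both regimes that $\mu_\infty(x,y)\gg \log_2 y\asymp N$, which is the required inequality under the hypothesis $u\log u/(\log y\log_2 y\log_3 y)^2\to\infty$.

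Second, I construct the divisors $D_j$ explicitly by a binary expansion. For $j\in\{0,1,\dots,2^N-1\}$ write $j=\sum_{i=1}^N \varepsilon_i 2^{N-i}$ with $\varepsilon_i\in\{0,1\}$, so that $j/2^N=\sum_{i=1}^N\varepsilon_i/2^i$. Define
\begin{equation*}
D_j:=\prod_{i:\,\varepsilon_i=1}p_i\;\cdot\;\prod_{i:\,\varepsilon_i=0}q_i.
\end{equation*}
Since the primes in $J_1,\dots,J_N,J_\infty$ are pairwise distinct (the intervals are disjoint for $y$ large in the relevant range), $D_j$ is a divisor of $\prod_{i=1}^N p_iq_i$, and it has exactly $N$ prime factors. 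Taking logarithms and using $p_i\in[(1-\kappa)y^{1-1/2^i},y^{1-1/2^i}]$ and $q_i\in[(1-\kappa)y,y]$, one obtains
\begin{equation*}
\frac{\log D_j}{\log y}\in\Big[\,N-\tfrac{j}{2^N}+N\tfrac{\log(1-\kappa)}{\log y},\;N-\tfrac{j}{2^N}\,\Big],
\end{equation*}
which translates to $D_j\in[(1-\kappa)^N y^{N-j/2^N},\,y^{N-j/2^N}]$, as required.

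The only nontrivial step is verifying $\mu_\infty(x,y)\gg N$ throughout the range \eqref{range1}; the divisor construction itself is algebraic and gives exact size bounds via the identity $\sum_{\varepsilon_i=1}(1-1/2^i)+|\{i:\varepsilon_i=0\}|=N-j/2^N$. I expect no real obstacle beyond carefully collating the estimates for $\mu_\infty(x,y)$ in the two regimes of Proposition~\ref{p1}, and noting that each of the $2^N$ distinct binary strings yields a distinct divisor $D_j$ of the specified size.
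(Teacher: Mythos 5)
Your proof is correct and follows essentially the same route as the paper: the first assertion is read off from Proposition~\ref{p1}, and the divisors $D_j$ are built by the same binary-subset construction (your $\varepsilon_i$ are the paper's $a_i$, and $|\{i:\varepsilon_i=0\}|$ is the paper's $a_0$, so the two formulas for $D_j$ give products of the same shape with the same exponent $N-j/2^N$). Your explicit verification that $\mu_\infty(x,y)/N\to\infty$ in the range~\eqref{range1} (via Lemma~\ref{Mumagnitude}, \eqref{Kappa-lower}, and the lower bounds on $Y$) is a worthwhile addition: the paper calls the first assertion ``a direct conclusion of Proposition~\ref{p1}'' but does not check that $\mu_\infty(x,y)/2\ge N$, which is exactly what is needed to extract $N$ distinct primes from $J_\infty$.
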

 
 \begin{proof}
The first part of Corollary is a direct conclusion of Proposition \ref{p1}.\\
 For the second part, let $n$ be a $y-$smooth integer satisfying the first part of Corollary. We fix the following divisor of $n$
 $$D:=\prod_{i=1}^{N} p_i q_i,$$
where $p_i \in J_i$ and $q_1,...,q_N \in J_\infty$.\\
Let $j$ be an arbitrary integer in $\{0,1,...,2^N-1\}$. Moreover, we define 
$$a_0:= N-\sum_{i=1}^{N}a_i,$$
where $a_i$'s get the values $0$ or $1$ such that 
\begin{equation}\label{ai}
\sum_{i=1}^{N}\frac{a_i}{2^i} = j/2^N.
\end{equation}
 We now define the divisor of $D_j$ of $D$ with the following form
  $$D_j:= \prod_{i=1}^N p_i^{a_i}\prod_{i=1}^{a_0} q_i,$$
By using the bounds of  $p_i$s and $q_i$s, one can get the following bounds for $D_j$.
$$ (1-\kappa)^N y^{N-\sum_{i=1}^Na_i/2^i}\leq D_j \leq y^{N-\sum_{i=1}^Na_i/2^i},$$ 
By using \eqref{ai}, we have
$$(1-\kappa)^N y^{N-j/2^N}\leq D_j \leq  y^{N-j/2^N},$$
and this finishes our proof.
\end{proof}
\bigskip
 We are ready now to prove Theorem \ref{thm1}.
 \begin{proof} [\textbf{Proof of Theorem \ref{thm1}}]
 Let $n\leq (1-\eta)x $ be a $y-$smooth integer with at least one prime factor $p_i$ in each  $J_i$ , where $i=1,..,N$, and $N$ prime divisors $q_1,q_2, ...,q_N$ in $ J_\infty$. Set
 $$m:= \frac{n}{\prod_{i=1}^{N} p_i q_i}.$$ 
 By this definition, we get
  $$\frac{n}{\prod_{i=1}^N p_iq_i}\geq \frac{n}{y^{2N}} >\sqrt{n},$$  
 when $4N\leq u$. Thus,
 $$m>\sqrt{n}.$$
 Let $\{r_v\}$ be the increasing sequence of prime factors of $m$ and set $d_v=r_1...r_v$.\\
Clearly, $m$ has at least one divisor bigger than $\frac{\sqrt{n}}{y^N}$. We suppose  that $l$ is the smallest integer such that $d_l \geq \frac{\sqrt{n}}{y^N}$, and evidently we have  $$d_{l-1} \leq \frac{\sqrt{n}}{y^N},$$ 
So, we arrive at the following bounds for $d_l$

\begin{equation}\label{d_i}
\frac{\sqrt{n}}{y^N}\leq d_l \leq yd_{l-1}\leq \frac{\sqrt{n}}{y^{N-1}},
\end{equation}
We pick $k \in \{0,1,2...,2^N-1\}$ such that 
\begin{equation} \label{dk}
\frac{\sqrt{n}}{y^N} y^{k/2^N} \leq d_l \leq \frac{\sqrt{n}}{y^N}y^{(k+1)/2^N}.
\end{equation}
By the second part of Corollary \ref{cor1}, for every $k$ in $\{0,1,...,2^N-1\}$ there exists a divisor $D_k$ such that
 \begin{equation*}\label{dj}
 (1-\kappa)^Ny^{N-k/2^N}\leq D_k \leq y^{N- k/2^N},
 \end{equation*}
 We define $d:= d_l D_k$, we have
\begin{equation*}\label{d}
(1-\kappa)^{N} \sqrt{n}\leq d \leq y^{1/2^N}\sqrt{n},
\end{equation*}
 By using the values of $N$ in \eqref{N1} and $\kappa$ in \eqref{Kappa}, we have
\begin{equation*}\label{y2N}
e^{-\eta/2}\sqrt{n}\leq d \leq e^{\eta/2}\sqrt{n}.
\end{equation*}
Applying the Taylor expansion for exponential functions, gives 
\begin{equation}\label{dbound}
\left(1-\eta +\frac{\eta^2}{2}+O(\eta^3)\right)^{1/2}\sqrt{n} \leq d \leq \left(1+\eta +\frac{\eta^2}{2}+O(\eta^3)\right)^{1/2} \sqrt{n}.
\end{equation}
By using the assumption $n\leq (1-\eta)x$ in the upper bound and lower bound above, we obtain 
$$d\leq \left(1-\frac{\eta^2}{2}+O(\eta^3)\right)^{1/2}\sqrt{x} \leq \sqrt{x},$$
 and 
$$\frac{n}{d} \leq \left(1+\eta+\frac{\eta^2}{2}+ O(\eta^3)\right)^{1/2} \sqrt{n} \leq \left(1-\frac{\eta^2}{2}+O(\eta^3)\right)^{1/2}\sqrt{x} \leq \sqrt{x}.$$
Thus, we can write $n\in S((1-\eta)x,y)$ as the product of two divisors less than $\sqrt{x}$, and we can deduce that 
$$ \Psi\left((1-\eta)x,y\right)\leq A(x,y) \leq \Psi(x,y),$$
By using \eqref{d<x}, we have

$$\frac{\Psi\left((1-\eta)x,y\right)}{\Psi(x,y)} = \left(1-\eta\right)^{\alpha} \left\{ 1+O\left(\frac{1}{u} +\frac{\log y}{y}\right)\right\} \rightarrow 1 \quad \text{as} \quad x,y\to \infty,$$
this finishes the proof.
 
\end{proof}
 
 
 \section{Proof of Theorem\ref{thm2}}

 In this section, we shall study the behaviour  of $A(x,y)$ for large values of $y$. When $y$ takes values very close to $x$, then the set of $y-$smooth integers contains integers having large prime factors. As we explained in the heuristic argument, one can expect that $A(x,y)= o(\Psi(x,y))$. To show this assertion, we recall the idea of Erd\H{o}s used to prove the multiplication table problem for integers up to $x$.\\
We start our argument by giving an upper bound for $A^*(x)$, defined by
\begin{equation}
A^*(x):= \# \left\lbrace ab: a,b\leq \sqrt{x} \quad \text{and} \quad (a,b)=1\right\rbrace.
\end{equation}
We shall find an upper bound of $A^*(x)$ by considering the number of prime factors of $a$ and $b$. We first define 
$$\pi_k(x) := \# \{n\leq x: \omega(n)= k\}$$
Therefore,
\begin{equation}\label{erdos idea}
\begin{split}
A^*(x)&\leq \sum_{k} \min \left\lbrace\pi_k(x) , \sum_{j=1}^{k-1} \pi_{j}(\sqrt{x}) \pi_{k-j} (\sqrt{x})\right\rbrace\\
& \leq \sum_{k}\min \left\lbrace\frac{cx}{\log x} \frac{(\log_2x)^{k-1}}{(k-1)!}, \sum_{j=1}^{k-1}\frac{c\sqrt{x}}{\log \sqrt{x}}\frac{(\log_2 \sqrt{x})^{j-1}}{(j-1)!} \frac{c\sqrt{x}}{\log \sqrt{x}} \frac{(\log_2 \sqrt{x})^{k-j-1}}{(k-j-1)!}\right\rbrace,
\end{split}
\end{equation}
where in the last inequality, we used the well-known result of Hardy and Ramanujan that states there are absolute constants $C$ and $c$ such that
\begin{equation}\label{hardy}
\pi_k(x) \leq \frac{cx}{\log x} \frac{(\log_2x+C)^{k-1}}{(k-1)!} \quad \text{for} \quad k=0,1,2,.. \quad \text{and} \quad x\geq 2.
\end{equation}
By simplifying the upper bound in \eqref{erdos idea} and using Stirling's formula
$$n!\sim n^{n+\frac{1}{2}}e^{-n}$$
we obtain
\begin{equation}\label{A*}
\begin{split}
A^*(x)&\leq \sum_{k} \min \left\lbrace\frac{cx}{\log x}\frac{(\log_2 x)^{k-1}}{(k-1)!} , \frac{4c^2 x}{(\log x)^2} \sum_{j=0}^{k-2} \frac{1}{(k-2)!}{k-2 \choose j}(\log_2 \sqrt{x})^{k-2}\right\rbrace\\
&= \sum_{k} \min \left\lbrace\frac{cx}{\log x}\frac{(\log_2 x)^{k-1}}{(k-1)!} ,\frac{4c^2 x}{(\log x)^2} \frac{(2\log_2 \sqrt{x})^{k-2}}{(k-2)!} \right\rbrace \\
&= \sum_{k\leq \frac{\log_2x}{\log 2}} \frac{4c^2 x}{(\log x)^2} \frac{(2\log_2\sqrt{x})^{k-2}}{(k-2)!}  
+  \sum_{k> \frac{\log_2 x}{\log 2}} \frac{cx}{\log x}\frac{(\log_2 x)^{k-1}}{(k-1)!}  \\
&\ll \frac{x}{(\log x)^{1-\frac{1+\log \log 2}{\log 2}}(\log_2x)^{1/2}} \to 0 \quad \text{as} \quad x\to \infty.
\end{split}
\end{equation}

 We shall get the same upper bound for $A(x)$. Let $n\leq x$ and there are $a$ and $b$ less than $\sqrt{x}$ such that $n=ab$. If $(a,b)= 1$ then $n$ is counted by $A(x)$ , and if $(a,b)=d>1$ then we can write $n$ as $n= a^{'} b^{'}d^2$ such that $(a^{'},b^{'})=1$. So, $\frac{n}{d^2}\leq \frac{x}{d^2}$, and $\frac{n}{d^2}$ will be counted by $A(\frac{x}{d^2})$. Therefore,
$$A(x) \leq \sum_{d \leq {\sqrt{x}}}A^*({\frac{x}{d^2}}) \ll A^*(x)$$
By \eqref{A*}, we get
$$A(x) \ll \frac{x}{(\log x)^{1-\frac{1+\log \log 2}{\log 2}}(\log_2x)^{1/2}}. $$
Thus,
$$A(x)=o(x) \quad \text{as} \quad x\rightarrow \infty.$$\\\\

 Motivated by Erd\H{o}s' idea for the multiplication table of integers up to $x$, we apply a similar method to find an upper bound for $A(x,y)$.\\
 The first step of proof is to study the following function which plays a crucial role in this section. Let
$$N_{k}(x,y,z) :=\#\{n\in S(x,y): \Omega_{z}(n)=k\},$$
where $\Omega_{z}(n)$ is the truncated version of $\Omega(n)$, only counting divisibility by primes not exceeding $z$ with their multiplicities. In other words
$$\Omega_{z}(n):= \sum _{\substack{p^{v}||n\\p\leq z}}v.$$
In the following lemma, by using induction on $k$, we shall find an upper bound of type \eqref{hardy} for $N_k(x,y,z)$. The reason of applying truncation is to sieve out prime factors exceeding some power of $y$ which are the cause of big error terms as $k$ increases in each step of induction. The upper bound of $N_k(x,y,z)$ leads us to generalize Erd\H{o}s' idea for $y-$smooth integers in a certain range of $y$.\\\\
\begin{lemma}\label{omegaz}
Let $u \leq (C-\epsilon)\log \log y$, where $C$ is a positive constant and $\epsilon>0$ is arbitrarily small. Set the parameter $z$ such that
$$\log \log z\ll u.$$
Then, there are constants $A$ and $B$ such that the inequality
\begin{equation}\label{Omegaz}
N_{k}(x,y,z)\leq \frac{A\Psi(x,y)}{\log z}\frac{(\log \log z+B)^k}{k!}
\end{equation}
holds for every integer $k>0$.
\end{lemma}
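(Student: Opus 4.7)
The plan is to prove the inequality by induction on $k$, modeled on the classical Hardy--Ramanujan argument but adapted to smooth integers via a double-counting identity for the truncated $\Omega_z$. I would begin by establishing the recurrence
$$k N_k(x,y,z) = \sum_{n\in S(x,y),\,\Omega_z(n)=k}\Omega_z(n) = \sum_{p\leq z}\sum_{j=1}^{k} N_{k-j}(x/p^j,\,y,\,z),$$
obtained from $\Omega_z(n)=\sum_{p\leq z}\sum_{j\geq 1}\mathbb{1}[p^j\mid n]$ by, for each $(p,j)$ with $p^j\mid n$, writing $n=p^j m$ so that $m\in S(x/p^j,y)$ and $\Omega_z(m)=k-j$. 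This recurrence is the engine that drives the induction.

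The base case $k=0$ is immediate from Saias's bound \eqref{teta}: the condition $\Omega_z(n)=0$ forces every prime factor of $n$ to exceed $z$, so $N_0(x,y,z)\leq\theta(x,y,z^+)\leq c\Psi(x,y)/\log z$, matching the target bound once $A\geq c$. For the inductive step I would invoke Theorem \ref{locald<y} (with a routine extension to handle $p^j>y$) to get $\Psi(x/p^j,y)\ll\Psi(x,y)/p^{j\alpha}$ uniformly in the relevant range, then substitute the inductive hypothesis into the recurrence and reindex $i=k-j$ to obtain
$$k N_k(x,y,z) \leq \frac{A'\Psi(x,y)}{\log z}\sum_{i=0}^{k-1}\frac{(\log\log z+B)^i}{i!}\sum_{p\leq z}\frac{1}{p^{(k-i)\alpha}}.$$
The term $i=k-1$ (i.e.\ $j=1$) carries the main contribution, governed by $\sum_{p\leq z}p^{-\alpha}$, while the terms with $i<k-1$ (i.e.\ $j\geq 2$) form a convergent tail controlled by $\sum_p p^{-2\alpha}$, which can be absorbed into the constant $B$.

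The key estimate to close the induction is
$$\sum_{p\leq z}\frac{1}{p^\alpha}\;\leq\;\log\log z+B_1,$$
which I would derive by writing $p^{-\alpha}=p^{-1}\cdot p^{1-\alpha}$, using \eqref{alph} and the growth of $\xi$ in \eqref{XI} to show that the hypotheses $u\leq(C-\epsilon)\log\log y$ and $\log\log z\ll u$ force $(1-\alpha)\log z\to 0$, whence $p^{1-\alpha}=1+o(1)$ uniformly for $p\leq z$, and then applying Mertens' theorem to $\sum_{p\leq z}1/p$. Choosing $B$ large enough to absorb $B_1$ together with the contribution of the $j\geq 2$ tail closes the induction, yielding $N_k(x,y,z)\leq A\Psi(x,y)(\log\log z+B)^k/(k!\log z)$.

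The principal obstacle is precisely this last estimate: the induction only closes if $\sum_{p\leq z}p^{-\alpha}$ stays \emph{within} $\log\log z+B$, which is exactly what the hypothesis on $u$ and $z$ is engineered to guarantee via the proximity of $\alpha$ to $1$. It is also the reason for truncating $\Omega(n)$ at $z$ in the first place: if the full $\Omega(n)$ were used, primes close to $y$ would spoil the Mertens-type bound and introduce a $(\log\log y)^k$ factor that would destroy the inequality in the regime $k\asymp u$, which is precisely the regime relevant to the proof of Theorem \ref{thm2}.
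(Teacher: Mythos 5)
Your recurrence $k N_k(x,y,z) = \sum_{p\le z}\sum_{j=1}^{k}N_{k-j}(x/p^j,y,z)$ is the correct exact identity for $\Omega_z$ (it is the clean double-count of the triples $(p,j,m)$ with $p^j\mid n$), and it is actually more careful than what the paper writes: the paper factors out only a single prime, asserting that there are $k+1$ decompositions $n=pm_1m_2$, whereas the true multiplicity is the number of \emph{distinct} primes $\omega_z(n)\le k+1$; consequently the displayed identity $(k+1)N_{k+1}=\sum_{p\le z}\sum_{m_1,m_2}1$ and the ensuing bound $N_{k+1}\le\frac{1}{k+1}\sum_{p\le z}N_k(x/p,y,z)$ are not justified (the latter inequality in fact goes the other way, since $\sum_{p\le z}N_k(x/p,y,z)=\sum_{n}\omega_z(n)\mathbb{1}[\Omega_z(n)=k+1]\le(k+1)N_{k+1}$). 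The paper's induction closes only because it silently drops the $j\ge2$ (prime-power) contribution, in effect treating $\Omega_z$ as if it were $\omega_z$.

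Precisely for that reason, however, your plan to ``absorb the $j\ge2$ tail into $B$'' has a genuine gap, and this is the step that would fail. After substituting the inductive hypothesis the tail is $\asymp\sum_{i\le k-2}\frac{(\log_2 z+B)^i}{i!}\cdot O(1)$, and you would need this to be dominated by $(B-B_1)\frac{(\log_2 z+B)^{k-1}}{(k-1)!}$. That is fine while $k\ll\log_2 z$, but once $k$ exceeds a bounded multiple of $\log_2 z+B$ the left side tends to $e^{\log_2 z+B}$ while the right side decays super-exponentially in $k$, so the inequality fails and the induction does not close. This is not an artifact of your bookkeeping: the claimed bound cannot hold for all $k>0$. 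Indeed, taking $n=2^k m$ with $m\in S(x/2^k,y)$ free of primes $\le z$ gives $N_k(x,y,z)\gg\Psi(x,y)/(2^{k}\log z)$, which is only geometric in $k$, whereas $(\log_2 z+B)^k/k!$ decays like $e^{-k\log k}$; the two are incompatible once $k\gg\log_2 z$. So the correct recurrence you wrote down exposes that the lemma, as stated for ``every integer $k>0$,'' is too strong; a rigorous treatment must either restrict $k$ to $O(\log_2 z)$ (sufficient for Theorem~\ref{thm2} after a separate crude estimate of the far tail) or switch from $\Omega_z$ to $\omega_z$, in which case the $j\ge2$ terms disappear and the Hardy--Ramanujan induction closes cleanly.
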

\begin{proof}
When $k=0$, by \eqref{teta}, evidently we have
$$N_0 (x,y,z) = \theta(x,y,z) \leq c\frac{\Psi(x,y)}{\log z},$$
where $c>0$ is a constant.
When $k=1$, we can represent $n$ as $n=pm$, where $p\leq z$ and every prime factor $q$ of $m$ is between $z$ and $y$, then using the definition of $\theta(x,y,z)$ we have

$$N_1(x,y,z)=\sum_{p\leq z}\sum_{\substack{m\leq x/p\\ q|m \Rightarrow z\leq q\leq y}}1=\sum_{p\leq z} \theta(x/p,y,z). $$
By applying the estimate \eqref{d<x} and \eqref{teta}, there is constant $c$ such that

$$N_1(x,y,z) \leq\sum_{p\leq z} \frac{c\Psi(x/p,y)}{\log z} = c\frac{\Psi(x,y)}{\log z}\sum_{p\leq z}\frac{1}{p^{\alpha}}\left\lbrace1+O\left(\frac{1}{u}\right)\right\rbrace.$$
 For the last summand we have

\begin{equation}\label{sumz}
\begin{split}
\sum_{p\leq z}\frac{1}{p^\alpha}&= \sum_{p\leq z} \frac{1}{p} \left(p^{1-\alpha}\right)\\
&= \sum_{p\leq z} \frac{1}{p} \left\lbrace1+O\left((1-\alpha)\log p\right)\right\rbrace,
\end{split}
\end{equation}
since  $(1-\alpha)\log p \leq (1-\alpha)\log z$, and $(1-\alpha)\log z$ is bounded in our range (see \eqref{1-alpha}). Therefore,
\begin{equation}
\sum_{p\leq z}\frac{1}{p^{\alpha}}= \log_2 z + O\left((1-\alpha) \log z\right),\\
\end{equation}
By using the estimate of $\alpha$ in \eqref{alph} and the upper bound of $z$, we get 
\begin{equation}\label{1-alpha}
(1-\alpha)\log z\ll \frac{\log u}{\log y} \log z \ll \frac{\log u}{\log_2 y} \ll \frac{\log_3 y}{\log_2 y},
\end{equation}
and we obtain
\begin{equation}\label{sumlogz}
\sum_{p\leq z} \frac{1}{p^{\alpha}}= \log_2 z +O\left(\frac{\log _3 y}{\log _2 y}\right).
\end{equation}
Thus,
\begin{equation}\label{O}
\sum_{p\leq z}\frac{1}{p^{\alpha}}\left\lbrace 1+O\left(\frac{1}{u}\right)\right\rbrace = \log \log z+ O(1),
\end{equation}
since we have $\log \log  z \ll u$. \\
Substituting \eqref{O} in the upper bound of $N_1(x,y,z)$, gives 
$$N_1(x,y,z) \leq \frac{c\Psi(x,y)}{\log z} \left(\log_2z+O(1)\right).$$
 We will show the lemma with $A=c$ and $B=O(1)$.
 We argue by induction: we assume that the estimate in \eqref{Omegaz} is true for any positive integer $k$, we now prove it for $n\in S(x,y)$ with $\Omega_z(n)=k+1$. There are $k+1$ ways to write $n$ as $n=pm_1m_2$ such that $p\leq z$ and $\Omega_z(m_1)=k$ and every prime factor of $m_2$ is greater than $z$. Then we have
\begin{equation*}
\begin{split}
N_{k+1}(x,y,z)&= \frac{1}{(k+1)} \sum_{p\leq z}\sum_{\substack{m_1\in S(x/(p),y)\\ \Omega_{z}(m_1)=k \\ m_2 \in S(x/(pm_1),y)\\q|m_2 \Rightarrow q>z}} 1
 \leq \frac{1}{(k+1)} \sum_{p\leq z}\sum_{\substack{m_1\in S(x/(p),y)\\ \Omega_{z}(m_1)=k}} 1\\
&= \frac{1}{(k+1)} \sum_{p\leq z}N_{k}(x/p , y, z)
\end{split}
\end{equation*}
By the assumption for $\Omega_{z}(n)=k$ and \eqref{d<x}, we get

\begin{equation}\label{k+1}
\begin{split}
 N_{k+1}(x,y,z)&\leq \frac{A(\log_2z+B)^k}{\log z(k+1)!} \sum_{p\leq z} \Psi(x/p,y) \\
& =\frac{A\Psi(x,y)}{\log z} \frac{(\log_2z+B)^k}{(k+1)!}\sum_{p\leq z}\frac{1}{p^{\alpha}}\left\lbrace1+O\left(\frac{1}{u}\right)\right\rbrace.
\end{split}
\end{equation}
 By applying the estimate in \eqref{O}, we arrive at the following bound for $N_{k+1}(x,y,z)$

$$N_{k+1}(x,y,z)\leq \frac{A\Psi(x,y)}{\log z} \frac{(\log_2z+B)^{k+1}}{(k+1)!},  $$
 so we derived our desired result.
\end{proof}
\bigskip






\begin{proof}[\textbf{Proof of Theorem \ref{thm2}}]
For a small $\epsilon>0$, we set $u < \left(\frac{\lambda}{\log 2}-\epsilon\right)\log_2 y$, where $\lambda$ is a fixed real number in the open interval $(1-2\log 2 , 1-\log 2)$.\\  We now set $z$ satisfying
\begin{equation}\label{lambda}
\log \log z= \frac{\log 2}{\lambda} u,
\end{equation}
 so the given ranges of $u$ and $z$ satisfy the conditions of Lemma \ref{omegaz}.\\
By the definition of $A(x,y)$, we have the following evident bound of $A(x,y)$
\begin{equation}\label{min}
\begin{split}
A(x,y) \leq \sum_{k} \min \left\lbrace\sum_{\substack{n\in S(x,y)\\ \Omega_{z}(n)=k}}1 , \sum_{j=1}^{k-1} \sum_{\substack{a\in S(\sqrt{x},y)\\ \Omega_{z}(a)=j}}1\sum _{\substack{b\in S(\sqrt{x},y)\\\Omega_{z}(b)=k-j}}1\right\rbrace.
\end{split}
\end{equation}
 We set  
$$L=\lfloor H \log_2 z\rfloor,$$
where 
$$H:= \frac{1-\lambda}{\log 2}.$$
We have $1-2\log 2 < \lambda< 1-\log 2$. Thus, $1<H<2$.\\\\
By using \eqref{min}, we write the following bound for $A(x,y)$
\begin{equation}
\begin{split}
A(x,y) &\leq \# \left\lbrace n\in S(x,y): \Omega_z(n)> L\right\rbrace +\# \left\lbrace ab: a,b \in S(\sqrt{x},y), \Omega_z(a)+\Omega_z(b)\leq L\right\rbrace\\
&= \sum_{k>L}N_{k}(x,y,z) + \sum_ {k\leq L}\sum_{j=0}^{k} N_{j}(\sqrt{x},y,z)N_{k-j}(\sqrt{x},y,z).
\end{split}
\end{equation}
By applying Lemma \ref{omegaz}, we have 
\begin{equation}
\begin{split}
A(x,y) &\ll \sum_{k> L}\frac{\Psi(x,y)}{\log z} \frac{(\log_2z+c)^k}{k!}+ \sum_{k\leq L}\sum_{j=0}^{k}\frac{\Psi^2(\sqrt{x},y)}{\log ^2 z} \frac{(\log_2 z +c)^j}{j!} \frac{(\log_2 z+c)^{k-j}}{(k-j)!}\\
& =\sum_{k>L} \frac{\Psi(x,y)}{\log z} \frac{(\log_2 z+c)^k}{k!}+ \sum_{k\leq L}\frac{\Psi^2(\sqrt{x},y)}{\log ^2 z} \sum_{j=0}^{k} \frac{1}{k!}{k\choose j}(\log_2 z+ c)^k\\
&=\sum_{k>L}\frac{\Psi(x,y)}{\log z} \frac{(\log_2 z+c)^k}{k!}+\sum_{k\leq L} \frac{\Psi^2(\sqrt{x},y)}{\log ^2 z} \frac{(2\log_2 z+c)^k}{k!}.\\
\end{split}
\end{equation}
By applying the simple form of $\Psi(x,y)$ in Corollary \ref{simplepsi}, and using the assumption \eqref{lambda}, we get
\begin{equation} \label{gran}
 \frac{\Psi^2(\sqrt{x},y)}{\Psi(x,y)}\asymp (\log z)^\lambda \quad \text{as} \quad u,y\to \infty.
 \end{equation}
 Thus,
\begin{equation}\label{A**}
A(x,y)\ll \frac{\Psi(x,y)}{\log z} \sum_{k>L}\frac{(\log_2 z+c)^k}{k!}+ \frac{(\log z)^{\lambda}\Psi(x,y)}{\log ^2 z} \sum_{k\leq L}\frac{(2\log_2 z+c)^k}{k!}.
\end{equation}
The maximum values of functions in the above summands (with respect to $k$) are attained at $k=\lfloor\log_2 z\rfloor$ and $k=\lfloor2\log_2 z\rfloor$ respectively. We have $\log \log z < L < 2 \log \log z$, so the function in the first summation in \eqref{A**} in decreasing for $k>L$, and by using Stirling's formula $k!\sim k^{k+\frac{1}{2}}e^{-k}$, we have

\begin{equation}\label{sum1}
\begin{split}
\sum_{k> L} \frac{(\log_2z)^k}{k!}&= \sum_{H\log_2 z<k\leq e\log_2 z} \frac{(\log_2 z)^k}{k!}+\sum_{e\log_2z < k \leq 2e\log_2z} \frac{(\log_2z)^k}{k!}+\sum_{k> 2e\log_2 z} \frac{(\log_2 z)^k}{k!}\\
& \ll (\log_2z)\left(\left(\frac{e}{H}\right)^{H\log_2z} +1\right)\\& \ll \frac{1}{(\log z)^{H\log H-H}}.
\end{split}
\end{equation}
The function in the second summation in \eqref{A**} is increasing for $k\leq L$, and we have
\begin{equation}\label{sum2}
\begin{split}
\sum_{k\leq L}\frac{(2\log_2 z+c)^k}{k!} \ll (\log_2 z)\left(\frac{2e}{H}\right)^{H\log_2z}= \frac{1}{(\log z)^{H\log H -H -H\log 2}}
\end{split}
\end{equation}
 Substituting the upper bounds obtained in \eqref{sum1} and \eqref{sum2} in  \eqref{A**}, and using the definition of $H$, gives
$$A(x,y) \ll \frac{\Psi(x,y)}{(\log z)^ {G(H)}},$$
where 
$$G(H) := 1+H\log H -H.$$
The function $G(H)$ is an increasing function in the interval $(1,2)$ with a zero at $H=1$. Thus, for any arbitrary $1-2\log 2 < \lambda < 1-\log 2$, we have
 $$A(x,y)= o(\Psi(x,y)) \quad \text{as} \quad x,y\to \infty,$$ 
 so we obtained our desired result.
\end{proof}

\bibliographystyle{plain}
\bibliography{references}

\begin{thebibliography}{10}

\bibitem{Alladi}
Krishnaswami Alladi.
\newblock An {E}rd{\H o}s-{K}ac theorem for integers without large prime
  factors.
\newblock {\em Acta Arith.}, 49(1):81--105, 1987.

\bibitem{banks}
William~D. Banks and David~J. Covert.
\newblock Sums and products with smooth numbers.
\newblock {\em J. Number Theory}, 131(6):985--993, 2011.

\bibitem{can}
E.~R. Canfield, Paul Erd\H~os, and Carl Pomerance.
\newblock On a problem of {O}ppenheim concerning ``factorisatio numerorum''.
\newblock {\em J. Number Theory}, 17(1):1--28, 1983.

\bibitem{TenDe}
R{\'e}gis de~la Bret{\`e}che and G{\'e}rald Tenenbaum.
\newblock Propri\'et\'es statistiques des entiers friables.
\newblock {\em Ramanujan J.}, 9(1-2):139--202, 2005.

\bibitem{erdos}
P.~Erd\H{o}s.
\newblock An asymptotic in the theory of numbers.(russian).
\newblock {\em Vestnik Leningrad. Univ.}, 15(13):41--49, 1960.

\bibitem{Erdos55}
Paul Erd\"os.
\newblock Some remarks on number theory.
\newblock {\em Riveon Lematematika}, 9:45--48, 1955.

\bibitem{y2y}
Kevin Ford.
\newblock Integers with a divisor in {$(y,2y]$}.
\newblock In {\em Anatomy of integers}, volume~46 of {\em CRM Proc. Lecture
  Notes}, pages 65--80. Amer. Math. Soc., Providence, RI, 2008.

\bibitem{Fried}
John~B. Friedlander.
\newblock Integers free from large and small primes.
\newblock {\em Proc. London Math. Soc. (3)}, 33(3):565--576, 1976.

\bibitem{Andrewsmooth}
Andrew Granville.
\newblock Smooth numbers: computational number theory and beyond.
\newblock In {\em Algorithmic number theory: lattices, number fields, curves
  and cryptography}, volume~44 of {\em Math. Sci. Res. Inst. Publ.}, pages
  267--323. Cambridge Univ. Press, Cambridge, 2008.

\bibitem{Hil}
Adolf Hildebrand.
\newblock On the number of positive integers {$\leq x$} and free of prime
  factors {$>y$}.
\newblock {\em J. Number Theory}, 22(3):289--307, 1986.

\bibitem{Hild}
Adolf Hildebrand.
\newblock On the number of prime factors of integers without large prime
  divisors.
\newblock {\em J. Number Theory}, 25(1):81--106, 1987.

\bibitem{Hil&Ten}
Adolf Hildebrand and G{\'e}rald Tenenbaum.
\newblock On integers free of large prime factors.
\newblock {\em Trans. Amer. Math. Soc.}, 296(1):265--290, 1986.

\bibitem{II}
{\'E}ric Saias.
\newblock Entiers sans grand ni petit facteur premier. {II}.
\newblock {\em Acta Arith.}, 63(4):287--312, 1993.

\bibitem{III}
{\'E}ric Saias.
\newblock Entiers sans grand ni petit facteur premier. {III}.
\newblock {\em Acta Arith.}, 71(4):351--379, 1995.

\bibitem{Tenenbaum}
G{\'e}rald Tenenbaum.
\newblock {\em Introduction to analytic and probabilistic number theory}.
\newblock Cambridge University Press, 1995.

\end{thebibliography}

\end{document}